\newtheorem*{thm*}{Theorem}
\newtheorem*{prop*}{Proposition}
\newtheorem*{cor*}{Corollary}
\newtheorem{thm}{Theorem}[section]
\newtheorem{lemma}[thm]{Lemma}
\numberwithin{equation}{section}
\theoremstyle{definition}
\newtheorem{defn}[thm]{Definition}
\newtheorem{rmk}[thm]{Remark}
\def\bR{\mathbb{R}}
\def\bC{\mathbb{C}}
\def\bL{\mathbb{L}}
\def\bZ{\mathbb{Z}}
\def\bQ{\mathbb{Q}}
\def\bP{\mathbb{P}}
\def\bL{\mathbb{L}}
\def\bS{\mathbb{S}}
\def\be{\mathbf{e}}
\def\bf{\mathbf{f}}
\def\cD{\mathcal{D}}
\def\cF{\mathcal{F}}
\def\cK{\mathcal{K}}
\def\cO{\mathcal{O}}
\def\cP{\mathcal{P}}
\def\cQ{\mathcal{Q}}
\def\Stab{\mathrm{Stab}}
\def\Aut{\mathrm{Aut}}
\def\ch{\mathrm{ch}}
\def\td{\mathrm{td}}
\def\Hom{\mathrm{Hom}}
\def\GL{\mathrm{GL}}
\def\SL{\mathrm{SL}}
\def\SO{\mathrm{SO}}
\def\NS{\mathrm{NS}}
\def\Coh{\mathrm{Coh}}
\def\Fuk{\mathrm{Fuk}}
\def\Pic{\mathrm{Pic}}
\def\Muk{\mathrm{Muk}}
\def\Lag{\mathrm{Lag}}
\def\sLag{\mathrm{sLag}}
\def\SLag{\mathrm{SLag}}
\def\SL{\mathrm{SL}}
\def\re{\mathrm{Re}}
\def\im{\mathrm{Im}}
\def\Vol{\mathrm{Vol}}
\def\disc{\mathrm{Disc}}
\def\vol{\mathrm{vol}}
\def\rk{\mathrm{rk}}
\def\Amp{\mathrm{Amp}}
\def\ra{\rightarrow}
\def\l{\langle}
\def\r{\rangle}
\title[Counting special Lagrangian classes and semistable Mukai vectors]{Counting special Lagrangian classes and semistable Mukai vectors for K3 surfaces}
\author{Jayadev~S.~Athreya, Yu-Wei~Fan, Heather~Lee}
\date{}
\begin{document}

\begin{abstract} Motivated by the study of the growth rate of the number of geodesics in flat surfaces with bounded lengths, we study generalizations of such problems for K3 surfaces. In one generalization, we give a result regarding the upper bound on the asymptotics of the number of classes of irreducible special Lagrangians in K3 surfaces with bounded period integrals.  In another generalization, we give the exact leading term in the asymptotics of the number of Mukai vectors of semistable coherent sheaves on algebraic K3 surfaces with bounded central charges, with respect to generic Bridgeland stability conditions.  
\end{abstract}
\maketitle

\setcounter{tocdepth}{2}
\tableofcontents

%%%%%%%%%%%%%%%%%%%%%%%%%%%%%%%%%%%%%%%%%%%%%%%%%%
\section{Introduction}
\subsection{Motivation and problems} For a Riemann surface, a holomorphic 1-form $\Omega$ (also called an Abelian differential) with finitely many zeros endows the surface a flat metric given by $g=\frac{1}{2}\Omega \overline \Omega$, with conical singularities at the zeros of $\Omega$, where the cone angles are integer multiples of $2\pi$.  The Abelian differential also gives an area form $\omega=\frac{i}{2}\Omega\wedge \overline \Omega.$ Such flat surfaces have nontrivial but restricted holonomy that the parallel transport of a vector along a small closed path around a conical singularity turns the vector by exactly the cone angle. Studying flat surfaces reveals many interesting mathematical phenomena as reviewed in \cite{Zorich}. We are interested in studying generalizations of problems on flat surfaces to other settings.

Having a metric, it is natural to study closed geodesics and saddle connections, which are geodesic segments connecting pairs of conical points. A geodesics can be described as a curve on which $\im(e^{i\phi}\Omega)$ restricts to zero, for some angle $\phi$.  Indeed, in a local coordinate chart away from the zeros of $\Omega$, we can express $\Omega$ as $\Omega=dz$, then straight lines of angle $\phi$ are the loci where  $\im(e^{i\phi}z)=0$. For a flat surface $(S, \Omega)$ with area normalized to one, one of the interesting problems is to understand the growth rate, as $R\to \infty$, of the counting function $N_{S,\Omega}(R)$ of the number of saddle connections, or the number of maximal cylinders filled with closed geodesics, of length at most $R$. This has been intensively studied. Masur \cite{Masur} showed the existence of quadratic upper and lower bounds for $N_{S,\Omega}(R)$ {for all flat surfaces}, and Eskin-Masur~\cite{EskinMasur}, building on work of Veech~\cite{Veech}, showed the exact quadratic asymptotics for almost all flat surfaces with respect to the natural Masur-Smillie-Veech measure on the space of quadratic differentials. More recently, Eskin-Mirazkahani-Mohammadi~\cite{EMM} showed {a Ces\`aro-type quadratic asymptotic} for every flat surface.

One example of generalization of geodesics on flat surfaces is to consider special Lagrangian submanifolds  in Calabi-Yau manifolds. A Calabi-Yau manifold of complex dimension $n$ is a complex K\"ahler manifold that admits a nowhere vanishing holomorphic top form (i.e. $n$-form).  By \cite{Yau78}, a compact Calabi-Yau manifold admits a Ricci-flat K\"ahler metric.  While this is not an exact higher dimensional generalization of flat surfaces, it provides a class of examples in a similar spirit since {the Ricci-flat metric} also has nontrivial but restricted holonomy.  In this paper, we specifically look at K3 surfaces.  A K3 surface is a compact complex surface that admits a nowhere vanishing holomorphic $2$-form $\Omega$ and is simply connected. By \cite{Siu83}, all K3 surfaces are K\"ahler, so they are Calabi-Yau. 

 Given a Calabi-Yau manifold $X$ of real dimension $2n$ with a Ricci-flat K\"ahler form $\omega$, this symplectic form $\omega$ is used to define Lagrangian submanifolds, which are real $n$ dimensional submanifolds on which $\omega$ restricts to 0. (On a Riemann surface, any curve is a Lagrangian.) The holomorphic top form $\Omega$ is used in addition to define special Lagrangian submanifolds similar to how it is used to define geodesics on Riemann surfaces.  A Lagrangian submanifold $L$ is special of phase $\phi$ if $\im(e^{i\phi}\Omega)|_L=0$.  Analogous to the counting function of geodesics on a flat surface, here we can consider a lattice theoretic counting function of irreducible special Lagrangian (sLag) classes with bounded period integral
\begin{equation}\label{eq: SL intro}
SL_{\omega,\Omega}(R)=\#\left\{\gamma\in H^n(X,\bZ)\colon \exists \text{ irreducible sLag } L \text{ s.t. } [L]^{Pd}=\gamma, |\gamma.\Omega|\leq R \right\},
\end{equation}
where $[L]^{Pd}$ denotes the Poincare dual of the homology class of $L$, and $\gamma.\Omega$ denotes the intersection pairing which is the period integral $\int_L\Omega$. In this paper, we focus on K3 surfaces where $n=2$. 

{Let $X$ be a  K3 surface with a Ricci-flat metric $g$.
There is a $2$-sphere family $(X,J_t)$, $t\in \bS^2$, known as the twistor family, of K3 surfaces that are all compatible with the metric $g$. 
Their associated K\"ahler forms  $\omega_t$, and the real and imaginary parts of their (normalized) nowhere vanishing holomorphic $2$-forms $\Omega_t$, all lie in a positive definite $3$-plane $P \subset H^2(X,\bR)$.}
The counting function of interest for special Lagrangian classes in this twistor family $\bS^2(P)$ formulation is  
\begin{multline}\label{eq: SL P intro}
SL_P(R)=\#\left\{\gamma\in H^2(X,\bZ)\colon \exists \omega_t \in \bS^2(P), L\  \text{irreducible sLag}  \right. \\ 
\text{w.r.t. }(\omega_t, \Omega_t), \left. [L]^{Pd}=\gamma, \vert\gamma.\Omega_t\vert \leq R\right\}.
\end{multline}
As pointed out in \cite{FilipK3twistor}, the Riemann surface analogue of the twistor sphere  is a circle in a cylinder, for which rotating along a circle does not change the metric nor the complex structure. The difference for a K3 surface is that the complex structure changes as we vary along the twistor sphere.  The work of \cite{FilipK3twistor, physicsCount} study the count of special Lagrangian tori in K3 surfaces in this twistor formulation, and \cite{physicsCount} also explains the relevance to physics.  Very briefly, special Lagrangians in K3 correspond to BPS states via compactifying  M-theory on K3, and their count is relevant to understanding the Bekenstein-Hawking entropy of simple black holes via counting microstates \cite{StromingerVafa}. 

Another generalization of geodesics that we can consider are semistable coherent sheaves, which correspond to Lagrangian submanifolds via  homological mirror symmetry (HMS).  The HMS conjecture was first formulated by \cite{Kontsevich} to fully capture the phenomenon of mirror symmetry, which is a nontrivial duality between complex geometry and symplectic geometry, as an equivalence of triangulated categories.  For a mirror pair of Calabi-Yau manifolds $(X,\omega_X, J_X)$ and $(Y,\omega_Y, J_Y)$, it predicts the following equivalences
\begin{equation}
D^\pi\Fuk(X,\omega_X)\cong D^b\Coh(Y,J_Y) \quad \text{and}\quad D^b\Coh(X,\omega_X)\cong D^\pi\Fuk(Y,J_Y), 
\end{equation}
where $D^\pi\Fuk$ is the split-closed derived Fukaya category and $D^b\Coh$ is the bounded derived category of coherent sheaves. For K3 surfaces, HMS is proved in the case of Greene-Plesser mirrors by \cite{SheridanSmithK3}.  

The Fukaya category depends on the symplectic structure but not on the complex structure, and its objects are Lagrangian submanifolds (other objects that might not be geometric are added in the derived category).  As we mentioned in the above, if in addition we consider a complex structure and a holomorphic top form, we can define special Lagrangian submanifolds that are the stable objects.  On the other hand, the category of coherent sheaves depends on the complex structure but not on the symplectic structure.  If in addition we consider the symplectic structure, we can define stable coherent sheaves.  {For example,  for a vector bundle $E$ on a complex curve, its slope is defined to be $\mu(E):=\deg E/\mathrm{rk}  E$, and $E$ is stable (semistable) if every subbundle $F$ satisfies $\mu(F)<\mu(E)$ ($\mu(F)\leq \mu(E)$).}  These considerations are behind Bridgeland's definition of stability conditions
building on Douglas' work \cite{Douglas01, DouglasICM} on $\Pi$-stability for D-branes.

The space of Bridgeland stability condition $\Stab(\cD)$ is defined \cite{BridgelandAnnals} on a triangulated category $\cD$, e.g. the derived category of coherent sheaves $\cD=\cD^b\Coh(X)$ for a Calabi-Yau manifold $X$. When $X$ is an algebraic K3 surface and $\cD=\cD^b\Coh(X)$, Bridgeland \cite{BridgelandK3} gives a detailed description of one connected component of $\Stab(\cD)$. Denote by $K(\cD)$ its Grothendieck group consisting of objects in $\cD$ up to some equivalence relations given by the triangulated structure.  For an element $E\in K(\cD)$, its Mukai vector $v(E)$ is a cohomology class element in $H^*(X,\bZ)$ (see  Equation \eqref{eqn: mukai vector} for a more precise definition).  The set of Mukai vectors are in bijection with the numerical Grothendieck group $N(\cD)$, which is the quotient $K(\cD)/\ker \chi(-,-)$, where $\chi$ is the Euler pairing.  The space of Bridgeland stability conditions $\Stab(\cD)$ consists of all pairs {$\sigma=(Z_\sigma, \cP_\sigma)$, where $Z_\sigma: N(\cD)\to \bC$ is a group homomorphism  called the central charge and $\cP_\sigma(\phi)$} is a full additive subcategory of $\cD$ consisting of $\sigma$-semistable objects of phase $\phi$ for each $\phi\in \bR$ (together with a few axioms that these need to satisfy). In this setting, the counting function of interest is the number of $\sigma$-semistable Mukai vectors with bounded central charge
\begin{equation}\label{eq: N_sigma intro}
N_\sigma(R)=\#\{\gamma\in N(\cD) \colon|Z_\sigma(\gamma)|\leq R\text{ and }
\exists\ \sigma\text{-semistable object }E\text{ with } v(E)=\gamma\}.
\end{equation}

We summarize in Table \ref{table:analogyflatsurface} the analogies between geodesics and saddle connections on flat Riemann surfaces, special Lagrangians in Calabi-Yau manifolds, and semistable objects in triangulated categories.  We summarize in Table \ref{table:analogymirrorK3} the correspondence between a mirror pair of K3 surfaces.  
\begin{table}[ht]
\centering
\setlength\extrarowheight{2pt}
    \begin{tabular}{c|c|c}
    Flat surfaces & Calabi--Yau manifolds & Triangulated categories \\
    \hline\hline
    Abelian differentials & Holomorphic top forms & Stability conditions \\ 
    \hline
    Geodesics & Special Lagrangians & Semistable objects \\
    \hline
    Lengths & Period integrals & Central charges 
    \end{tabular}
\caption{Analogy with flat surfaces}
    \label{table:analogyflatsurface}
\end{table}
\begin{table}[ht]
\centering
\setlength\extrarowheight{2pt}
    \begin{tabular}{c|c}
    K3 surface $X$ & K3 surface $Y$ \\
    \hline\hline
    \begin{tabular}{@{}c@{}}Symplectic form $\omega_X$, \\ which defines Lagrangian  \end{tabular}& \begin{tabular}{@{}c@{}}Holomorphic $2$-form $\Omega_Y$, \\ which gives $\cD^b\Coh(Y,\Omega_Y)$\end{tabular} \\
    \hline
    \begin{tabular}{@{}c@{}}Holomorphic $2$-form $\Omega_X$, \\ which defines special Lagrangian \end{tabular}&
     \begin{tabular}{@{}c@{}}Symplectic form $\omega_Y$, \\ which gives $\sigma_\omega\in\Stab(\cD^b(Y))$\end{tabular} \\
    \hline
    $\int_{\text{--}}\Omega_X$ & $Z_{\sigma_\omega}(\text{--})=(\exp(\omega),\text{--})_\Muk$
    \end{tabular}
\caption{Correspondence between mirror pairs of K3 surfaces $(X,Y)$}
\label{table:analogymirrorK3}
\end{table}

%By \cite{BridgelandAnnals}, $\Stab(\cD)$ is a complex manifold, and there is a left action on $\Stab(\cD)$ by the group of triangulated autoequivalences  $\Aut(\cD)$ of $\cD$, and there is a right action on $\Stab(\cD)$ by the universal cover $\widetilde{GL(2,\bR)^+}$ of $GL(2,\bR)^+$, which acts on $Z: K_0(\cD)\to (\bC\cong \bR^2)$ by post-composition.

\subsection{Results} We begin by stating our result on the growth rate of of the number of semistable classes $N_\sigma(R)$ as introduced in Equation \eqref{eq: N_sigma intro}.   In general, \cite{BridgelandAnnals} showed that $\Stab(\cD)$ is a complex manifold, with a left action by the group of triangulated autoequivalences  $\Aut(\cD)$. 
There is a subset $U(\cD)\subset \Stab(\cD)$ known as the geometric stability conditions (see Definition \ref{def: geometric stab}), and denote by $\Stab^\dagger(\cD)$ the connected component of $\Stab(\cD)$ that contains $U(\cD)$.  For $\cD:=D^b\Coh(X)$ being the  the derived category of coherent sheaves on an algebraic K3 surface,  \cite{BridgelandK3} showed that there is some autoequivalence of $\cD$ that maps $\Stab(\cD)^\dagger$ into the closure of a subset $U(\cD)$.
Therefore, for almost every $\sigma\in\Stab^\dagger(\cD)$, we can find a representative $\sigma'\in U(\cD)$ such that $N_\sigma(R)=N_{\sigma'}(R)$. Furthermore, there is a subset $V(\cD)\subset U(\cD)$ that is the set of geometric stability conditions of phase 1 (more precisely defined in Equation \eqref{eq: def V}).  It is shown in \cite{BridgelandK3} that for any $\sigma' \in U(\cD)$, $Z_{\sigma'}$ can be obtained from $Z_{\sigma}$, for some $\sigma\in V(\cD)$, via a $\GL^+(2,\bR)$ action.  More precisely, there is a $g\in\GL^+(2,\bR)$ such that  $Z_{\sigma'}=Z_\sigma g$, where $g$ acts by multiplication to the image of $Z_\sigma$ in $\bC\cong \bR^2$. Any element of $\GL^+(2,\bR)=\bR^+\times\SL(2,\bR)$ is a composition of three different types of elements: 
\begin{itemize}
    \item  $g\in \bR^+$,
    \item  $g\in \SO(2,\bR)$, i.e. in the rotational part of $\SL(2,\bR)$, and 
    \item  $g=\begin{pmatrix}1&\kappa\\ 0&\lambda\end{pmatrix}$ is a shear by $\kappa+i\lambda$.
\end{itemize}
For a geometric stability of phase 1, its central charge is of the form   $Z_\sigma=\exp(B+i\omega)$, where $B,\omega$ are real divisor classes and $\omega$ is ample.

\begin{thm}\label{thm: count stable objects}
Let $X$ be a complex algebraic K3 surface. Then for almost every $\sigma\in\Stab^\dagger(\cD^b(X))$,
\[
N_\sigma(R)=C(\sigma)\cdot R^{\rho+2}+o(R^{\rho+2}),
\]
where $\rho$ is the Picard rank of $X$, and $C(\sigma)$ is a constant depending on $\sigma$. For $\sigma \in V(\cD)$,
\[C(\sigma)=\frac{2\pi^{(\rho+2)/2}}{(\rho+2)\Gamma(\frac{\rho}{2}+1)(\omega^2)^{(\rho+2)/2}\sqrt{|\disc\NS(X)|}}.
\] 
For $\sigma'\in U(\cD)$, $Z_{\sigma'}=Z_{\sigma}g$ for some $\sigma \in V(\cD)$ and $g\in \GL^+(2,\bR)$.  There are three cases
\begin{itemize}
\item  $g\in \bR^+$, then  $C(\sigma')=\frac{C(\sigma)}{g^{\rho+2}}$.
\item  $g$ is in the rotation part of $\SL(2,\bR)$, then $C(\sigma')=C(\sigma)$.
\item $g$ is a shear by $\kappa+i\lambda$, $\lambda>0$, then
\[
C(\sigma')=\frac{\pi^{\rho/2}\displaystyle\int_0^{2\pi}   \left(\cos^2\theta+ \frac{1}{\lambda^2} (\sin\theta-\kappa\cos\theta)^2\right)^{\rho/2}d\theta}{(\rho+2)\Gamma\left(\frac{\rho}{2}+1\right)\lambda(\omega^2)^{(\rho+2)/2}\sqrt{|\mathrm{Disc}\NS(X)|}}.
\]
\end{itemize}
\end{thm}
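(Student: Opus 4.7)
The plan is to reduce a general $\sigma \in \Stab^\dagger(\cD^b(X))$ to a geometric reference $\sigma_0 \in V(\cD)$, identify the set of semistable Mukai vectors with a simple numerical condition, and then perform a lattice-point count. Autoequivalences in $\Aut(\cD)$ act on $N(\cD)$ as Mukai isometries while permuting semistable objects, so they preserve $N_\sigma(R)$; combined with the result of \cite{BridgelandK3} recalled above, we may assume $\sigma \in U(\cD)$, and any such $\sigma$ satisfies $Z_\sigma = Z_{\sigma_0}\,g$ for some $\sigma_0 \in V(\cD)$ and $g \in \GL^+(2,\bR)$. It then suffices to compute $C(\sigma_0)$ and track how $C$ transforms under the three basic $\GL^+$-actions.

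For $\sigma_0 \in V(\cD)$ with $Z_{\sigma_0}(v) = (\exp(B+i\omega), v)_\Muk$, the principal input I would invoke is the generic equality $\{v \in N(\cD) : v \text{ is the Mukai vector of a } \sigma_0\text{-semistable object}\} = \{v \in N(\cD) : v^2 \geq -2\}$, combining the existence of stable objects in classes with $v^2 \geq -2$ (Bayer--Macri type results, building on Bridgeland) with the Bogomolov-type inequality $v(E)^2 \geq -2$ for semistable $E$. Granting this, $N_{\sigma_0}(R) = \#\{v \in N(\cD) : v^2 \geq -2,\ |Z_{\sigma_0}(v)| \leq R\}$. I would then Mukai-orthogonally decompose $N(\cD) \otimes \bR = P \oplus P^\perp$, where $P = \bR \cdot \re\exp(B+i\omega) \oplus \bR \cdot \im\exp(B+i\omega)$. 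A direct Mukai-pairing computation shows the two generators are Mukai-orthogonal with Mukai-square $\omega^2$ each, so $P$ is positive definite of rank $2$ and $P^\perp$ of rank $\rho$ is negative definite by the signature $(2,\rho)$ of the Mukai form on $N(\cD)$. In the induced decomposition $v = v_P + v_{P^\perp}$ one gets $|Z_{\sigma_0}(v)|^2 = \omega^2\, v_P^2$, and $v^2 \geq -2$ confines $v_{P^\perp}$ to a $(-\text{Mukai})$-ball of radius $\sqrt{2 + v_P^2}$; thus $K_R := \{v^2 \geq -2,\ |Z_{\sigma_0}(v)| \leq R\}$ is bounded of diameter $O(R)$, and a standard Lipschitz-boundary lattice-point estimate yields $N_{\sigma_0}(R) = \Vol_{\text{Leb}}(K_R) + O(R^{\rho+1})$.

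The volume of $K_R$ is computed by polar integration on $P$ of the $\rho$-dimensional ball Mukai-volumes on $P^\perp$; the leading-order Mukai volume evaluates to $\frac{2\pi^{(\rho+2)/2}R^{\rho+2}}{(\rho+2)\Gamma(\rho/2+1)(\omega^2)^{(\rho+2)/2}}$, and dividing by $\sqrt{|\disc N(\cD)|} = \sqrt{|\disc \NS(X)|}$ converts to Lebesgue measure, reproducing the stated $C(\sigma_0)$. For $\sigma' \in U(\cD)$ with $Z_{\sigma'} = Z_{\sigma_0}\,g$ I would handle each case separately: a scaling $g \in \bR^+$ replaces $R$ by $R/g$, giving $C(\sigma') = C(\sigma_0)/g^{\rho+2}$; a rotation $g \in \SO(2)$ preserves $|Z|$, giving the same $C$; and the shear $g = \bigl(\begin{smallmatrix}1 & \kappa \\ 0 & \lambda\end{smallmatrix}\bigr)$ converts the disk $|Z_{\sigma'}|\leq R$ into an ellipse in the image of $Z_{\sigma_0}$, which I would reparametrize via $\tilde a = s\cos\phi,\ \tilde b = s(\sin\phi - \kappa\cos\phi)/\lambda$ (Jacobian $s/\lambda$). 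This yields $\tilde a^2 + \tilde b^2 = s^2[\cos^2\phi + (\sin\phi - \kappa\cos\phi)^2/\lambda^2]$, and after repeating the $P^\perp$-ball integration the claimed integrand and the overall $1/\lambda$ factor emerge. The principal obstacle is the generic identification of Mukai vectors of semistables with the locus $v^2 \geq -2$, which depends on substantial existence theorems for stable objects on algebraic K3 surfaces.
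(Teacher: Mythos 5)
Your overall architecture mirrors the paper's (reduction via autoequivalences and the $\GL^+(2,\bR)$-action to $V(\cD)$, the orthogonal decomposition of $N(\cD)_\bR$ with $\l\re\varphi,\re\varphi\r=\l\im\varphi,\im\varphi\r=\omega^2$, the discriminant normalization, and the shear substitution with Jacobian $s/\lambda$), but there is a genuine gap at what you yourself flag as the ``principal input.'' The asserted generic equality $\{\text{Mukai vectors of } \sigma_0\text{-semistable objects}\}=\{v\in N(\cD): v^2\geq -2\}$ is false, and the ``Bogomolov-type inequality $v(E)^2\geq -2$ for semistable $E$'' you invoke to justify it holds only for \emph{stable} $E$ (where $v(E)^2=\dim\Ext^1(E,E)-2\geq -2$). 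The correct statement --- Bayer--Macr\`i, Theorem 2.15 as quoted in the paper --- concerns the primitive part: writing $v=mv_0$ with $v_0$ primitive and $m>0$, a generic $\sigma$ admits a semistable object of class $v$ if and only if $v_0^2\geq -2$. Since the Mukai lattice is even, the set of semistable classes is $\{v : v^2\geq 0\}\cup\{mv_0 : v_0^2=-2,\ m\geq 1\}$, which strictly contains your locus: for a spherical class $v_0$ (e.g.\ $v(\cO_X)$) and $m\geq 2$, the class $mv_0$ supports semistable objects ($S$-equivalent to $E_0^{\oplus m}$ with $E_0$ stable spherical) yet has $(mv_0)^2=-2m^2<-2$. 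So your identification undercounts $N_{\sigma_0}(R)$, and as written your computation establishes only the lower bound $N_{\sigma_0}(R)\geq C(\sigma_0)R^{\rho+2}+o(R^{\rho+2})$.

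What is missing is exactly the portion of the paper's proof devoted to these extra classes, and it is not a formality: one must show $\#\{mv_0 : v_0^2=-2,\ m\geq 1,\ |Z_\sigma(mv_0)|\leq R\}=o(R^{\rho+2})$. The paper does this in two steps. First, the support property --- positivity of the systole $\mathrm{sys}(\sigma)=\min\{|Z_\sigma(v(E))| : E\ \sigma\text{-semistable}\}$ --- bounds the multiplicity by $m\leq R/\mathrm{sys}(\sigma)$. Second, one needs to count integer points on the affine quadric $\{v^2=-2\}$ of signature $(2,\rho)$ satisfying $|Z_\sigma(v)|\leq R$; the paper invokes Duke--Rudnick--Sarnak (together with Borel--Harish-Chandra finiteness of $\vol(H(\bZ)\backslash H(\bR))$ for $H=\SO(2,\rho-1)$) to get order $R^\rho$ (or $R^\rho\log R$), whence the product with $R/\mathrm{sys}(\sigma)$ is $o(R^{\rho+2})$. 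Note that these quadric points fill a hypersurface of dimension $\rho+1$ and diameter $O(R)$ (even boundedness uses $v^2=-2$, since $|Z_\sigma|$ is only a seminorm), so this step is a genuine arithmetic counting input on a codimension-one variety and cannot be absorbed into your Lipschitz-boundary volume estimate, which only sees full-dimensional regions. Until you state the Bayer--Macr\`i input correctly and supply such a bound for the multiples of $(-2)$-classes, the asymptotic equality claimed in the theorem is not proved.
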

\noindent One can see that the coefficient $C(\sigma)$ of the leading term does not depend on $B$.  As stated in the theorem, this formula applies to ``almost every'', or generic, stability condition, which make up the complement of a measure zero set (more precisely, see Definition \ref{def: generic} and the paragraph below that).

Now we turn to the counting function $SL_{\omega,\Omega}(R)$ of special Lagrangian classes as introduced in Equation \eqref{eq: SL intro} (with $n=2$ for K3 surfaces).  Denote by $\Lag(X,\omega):=H^2(X,\bZ)\cap [\omega]^\perp\subset H^2(X,\bZ)$ be the lattice of consisting of cohomology classes of Lagrangian submanifolds (see the paragraph containing Equation \eqref{eq: Lag def} for more explanation).  We have the following result.  

\begin{thm}\label{thm: SL omega Omega}
Assuming $\Lag(X,\omega)$ has signature $(2,19)$, then
\[
SL_{\omega,\Omega}(R)\leq
C(\omega,\Omega)R^{21}+o(R^{21})\]
where 
\[
C(\omega,\Omega)=\frac{2\pi^{21/2}}{21\Gamma(\frac{21}{2})K_{\Omega}^{{21}/2}\sqrt{|\disc \Lag(X,\omega)|}}.
\]
\end{thm}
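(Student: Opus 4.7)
The strategy is to enclose every irreducible special Lagrangian class in a bounded subset of $\Lag(X,\omega)\otimes\bR$, then count lattice points via the covolume $\sqrt{|\disc\Lag(X,\omega)|}$.

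First, I would establish two constraints on the class $\gamma = [L]^{Pd}$ of an irreducible sLag $L$. Since $\omega|_L = 0$, we have $\gamma\cdot[\omega] = \int_L \omega = 0$, so $\gamma \in \Lag(X,\omega)$. Irreducibility forces $L$ to be a connected closed oriented surface of genus $g\geq 0$, and the Lagrangian adjunction formula $[L]\cdot[L] = -\chi(L)$ in the symplectic 4-manifold $X$ then gives $\gamma^2 = 2g-2 \geq -2$.

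Next, I would orthogonally decompose $\gamma = \gamma_+ + \gamma_-$ in $\Lag(X,\omega)\otimes\bR \cong \bR^{2,19}$, where $\gamma_+$ lies in the positive 2-plane $P := \mathrm{span}_\bR(\re\Omega,\im\Omega)$ and $\gamma_- \in P^\perp$. A direct computation (using $\Omega\cdot\Omega=0$ together with the convention defining $K_\Omega$) yields $|\gamma\cdot\Omega|^2 = K_\Omega\cdot\gamma_+^2$, so the hypothesis $|\gamma\cdot\Omega|\leq R$ gives $\gamma_+^2 \leq R^2/K_\Omega$. Combined with $\gamma^2 = \gamma_+^2 - |\gamma_-|^2 \geq -2$, we obtain $|\gamma_-|^2 \leq \gamma_+^2 + 2$. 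Every irreducible sLag class thus lies in the confinement region
\[
\Omega_R := \left\{\gamma \in \Lag(X,\omega)\otimes\bR \,:\, \gamma_+^2 \leq R^2/K_\Omega,\ |\gamma_-|^2 \leq \gamma_+^2 + 2\right\}.
\]

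I would then compute the Euclidean (Majorant) volume of $\Omega_R$ using polar coordinates in the 2-plane and 19-plane separately. With $r=|\gamma_+|$, $s=|\gamma_-|$, and $\vol(S^{18}) = 2\pi^{19/2}/\Gamma(19/2)$,
\[
\vol(\Omega_R) = \frac{2\pi\cdot\vol(S^{18})}{19}\int_0^{R/\sqrt{K_\Omega}} r(r^2+2)^{19/2}\,dr \;=\; \frac{2\pi^{21/2}}{21\,\Gamma(21/2)\, K_\Omega^{21/2}} R^{21} + o(R^{21}),
\]
where the simplification uses $\Gamma(21/2) = \tfrac{19}{2}\Gamma(19/2)$. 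Standard asymptotics for lattice points in a regularly expanding body then yield $\#(\Lag(X,\omega)\cap\Omega_R) = \vol(\Omega_R)/\sqrt{|\disc\Lag(X,\omega)|} + o(R^{21})$. Since every irreducible sLag class lies in $\Omega_R$, $SL_{\omega,\Omega}(R) \leq \#(\Lag(X,\omega)\cap\Omega_R)$, which delivers the claimed upper bound.

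The main obstacle is (i) justifying that the region $\Omega_R$---which is not merely a dilate of a fixed body, since the negative-part bound $|\gamma_-|^2\leq\gamma_+^2+2$ depends on $\gamma_+$---is tame enough (piecewise-smooth, semialgebraic, with boundary area $O(R^{20})$) for the lattice-point asymptotic to carry the stated $o(R^{21})$ error; and (ii) confirming that the additive ``$+2$'' in the bound on $|\gamma_-|^2$ contributes only to lower-order terms. Both reduce to routine but careful estimates on the integral above and on the geometry of $\partial\Omega_R$.
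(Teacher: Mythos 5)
Your proposal is correct and follows essentially the same route as the paper: both reduce the problem, via the bound $\gamma^2\geq -2$ for irreducible sLag classes (which the paper cites from Lai--Lin--Schaffler, Lemma \ref{lemma: sLag}, rather than re-deriving by Lagrangian adjunction), to a lattice-point count in $\{\gamma\in\Lag(X,\omega)\colon \gamma^2\geq-2,\ |\gamma.\Omega|\leq R\}$, and both obtain the same leading constant by decomposing $\Lag(X,\omega)_\bR\cong\bR^{2,19}$ into the positive $2$-plane spanned by $\re\Omega,\im\Omega$ and its negative-definite complement, integrating in polar coordinates, and normalizing by the covolume $\sqrt{|\disc\Lag(X,\omega)|}$. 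The two technical points you flag --- that your region is not an exact dilate and that the ``$+2$'' must contribute only lower order --- are precisely what the paper's Lemma \ref{lemma: dilate} settles, by counting the exact dilate $\{\gamma^2\geq 0\}$ and showing that the shell $\gamma^2\in[-2,0)$ contributes only $o(R^{21})$.
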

\noindent Note that $\Lag(X,\omega)$ has signature $(2,19)$ if $\omega$ is a rational K\"ahler class, so the statement holds in particular for polarized K3 surfaces.

In the twistor formulation as introduced in Equation \eqref{eq: SL P intro}, we have the following result.
\begin{thm}\label{thm: SL P}
Let $P$ be a positive definite $3$-plane in $H^2(X,\bR)$ {parametrizing a twistor family $\{(X,\omega_t,\Omega_t)\}_{t\in\bS^2(P)}$}. Then
\[
SL_P(R)\leq C\cdot R^{22}+o(R^{22})
\]
where $C$ is a constant independent of the choice of the twistor family $\bS^2(P)$.
\end{thm}

\noindent\textbf{Acknowledgement:} We would like to thank the Mathematical Sciences Research Institute for its hospitality in Fall 2019, where this work originated at the program on \emph{Holomorphic Differentials in Mathematics and Physics}.  We also want to thank Monty McGovern for helpful conversations, and the anonymous referee for their careful reading and helpful comments.

\section{Background}
\subsection{Lattice terminology}
A lattice $\bL$ is a finitely generated free $\bZ$-module, together with a nondegenerate symmetric bilinear form $(-, -): \bL \times \bL \to \bZ$.   With respect to a basis on the $\bZ$-module, the bilinear form can be represented by a matrix.  The signature $(n_+, n_-)$ of $\bL$ is the number $n_+$ of positive and the number $n_-$ of negative eigenvalues of that matrix.  The signature is independent of the choice of {basis}.

\subsection{K3 surfaces}
A K3 surface is a compact complex surface $X$ that admits a nowhere vanishing holomorphic $2$-form and is simply connected.  Denote the complex structure of $X$ by $J$, and let $\Omega_J$ be a nowhere vanishing holomorphic 2-form, whose choice is unique up to scaling.   In some occasions in this paper, we would like to only consider algebraic K3 surfaces, which coincides with the subset of all complex analytic K3 surfaces that are projective. 
\subsubsection{K3 lattice}
All K3 surfaces are diffeormorphic, so they have the same cohomology groups, which are  $H^0(X,\bZ)\cong H^4(X,\bZ)\cong \bZ$, $H^1(X,\bZ)\cong H^3(X,\bZ)=0$, and $H^2(X,\bZ)\cong \bZ^{22}$. Because $H^2(X,\bZ)$ is free, the  intersection pairing,  
\begin{equation}
( - . -): H^2(X,\bZ)\times H^2(X, \bZ) \to \bZ,
\end{equation}
given by the cup product is a nondegenerate symmetric bilinear form.  We will always use $a.b$ or $(a.b)$ to denote intersection pairing.

A K3 lattice is $H^2(X,\bZ)$ together with the intersection pairing above.  For any K3 surface, there is a choice of basis for $H^2(X,\bZ)$ with respect to which the intersection pairing corresponds to a block matrix of the form 
\begin{equation}
\Lambda :=U^{\oplus 3}\oplus (-E_8)^{\oplus 2},
\end{equation}
where $U$ represents $\bZ^2\cong \bZ\langle e \rangle \oplus \bZ\langle f\rangle$ with the bilinear form given by $e^2=f^2=0$ and $e.f=1$.   So $U$ is called the hyperbolic plane, and it has signature $(1,1)$.  
The $E_8$ lattice is positive definite, so altogether $\Lambda$ has signature $(3, 19)$.  An {isometry}
\begin{equation}
\varphi: H^2(X,\bZ)\to \Lambda
\end{equation}
induced by choosing a basis is called a marking of $X$.

\subsubsection{Weight-two Hodge structure and the period domain} The intersection pairing on $H^2(X,\bZ)$ extends complex linearly to the intersection pairing on $H^2(X,\bC)=H^2(X,\bZ)\otimes_\bZ \bC$ that is the same as
\begin{equation}
(-.-): H^2(X,\bC)\otimes H^2(X,\bC)\to \bC, \ \ \ \ \ a.b =\int_{X}a\wedge b.
\end{equation}
The isometry $\varphi:H^2(X,\bZ)\to \Lambda$ extends to 
\begin{equation}
\varphi_\bC := \varphi \otimes \bC: H^2(X,\bC)\to \Lambda_\bC:=\Lambda\otimes_\bZ\bC.
\end{equation}

Because K3 surfaces are K\"ahler \cite{Siu83}, we have the following weight-two Hodge decomposition
\begin{equation}\label{eqn: hodge structure}
H^2(X, \bC)= H^{2,0}(X) \oplus H^{1,1}(X)\oplus  H^{0,2}(X).
\end{equation}
This decomposition is determined by the complex line  {$H^{2,0}(X)\cong \bC[\Omega_J]$}.  This is because  $H^{0,2}(X)$ is complex conjugate to $H^{2,0}(X)$, and $H^{1,1}(X)$ is orthogonal to $H^{2,0}(X)\oplus H^{0,2}(X)$.   We have $h^{2,0}(X)=h^{0,2}(X)=1$ and  $h^{1,1}(X)=20$.

 Two K3 surfaces, $X_1$ and $X_2$, are said to be Hodge isometric if there is an isomorphism $\phi:H^2(X_1,\bZ)\to H^2(X_2,\bZ)$ such that $\phi$ preserves the intersection form and $(\phi\otimes \bC)(H^{2,0}(X_1))=H^{2,0}(X_2)$, hence preserving the weight-two Hodge structure. Torelli theorem for K3 surfaces  says that $X_1$ and $X_2$, are isomorphic as complex manifolds if and only if they are Hodge isometric.  {This is proved by} \cite{TorelliAG} {in the algebraic case and by} \cite{TorelliNonAG} {in the non-algebraic case, and see} \cite[Chapter 2]{HuybrechtsBook} {for an exposition. }

Note that $\Omega_J$ satisfies $\Omega_J.\Omega_J=0$ and $\Omega_J.\overline\Omega_J>0$. We call the following set  the period domain,
\begin{equation}\label{eq: period domain}
\begin{array}{ll}
\Omega(\Lambda_{\bC})&:= \{ [\Omega]\in \bP(\Lambda_\bC): \Omega.\Omega=0, \ \Omega. \overline \Omega>0\} \\
 &\ = \{ [\Omega]\in \bP(\Lambda_\bC): K_\Omega = (\re \Omega)^2=(\im \Omega)^2>0, (\re \Omega).(\im\Omega) =0\}.
  \end{array}
\end{equation}
{The map $X\mapsto [\varphi_\bC(H^{2,0}(X))]\in \Omega (\Lambda_\bC)$ } is called the period mapping, and it takes a $(2,0)$-form $\Omega$ to its marking $\varphi_\bC(\Omega)\in \Lambda_\bC$, which corresponds to the line $[\varphi_\bC(\Omega)]\in \bP(\Lambda_\bC)$. This map is surjective but not injective {on the moduli space of marked K3 surfaces} \cite[Chapter 6]{HuybrechtsBook}.

\subsubsection{N\'eron-Severi lattice}

For compact K\"ahler manifolds, the image of the first Chern class map $c_1: \Pic(X)\to H^2(X,\bC)$ is known as the N\'eron-Severi group, $\NS(X)$, i.e. {numerical classes} of line bundles classified by the first Chern class.  By Lefschetz theorem on $(1,1)$-classes, this image is equal to
\begin{equation}
H^{1,1}(X,\bZ):= H^{1,1}(X)\cap \mathrm{Image} \left(H^2(X,\bZ)\to H^2(X,\bC)\right).
\end{equation}
The kernel of this map is $H^1(X,\cO_X)$, which is $0$ for K3 surfaces, so this map is injective.  Hence for K3 surfaces, the $c_1$ map gives the identification
\begin{equation}
\Pic(X) = \NS(X) = H^{1,1}(X,\bZ).
\end{equation}
Because $H^{1,1}(X)$ has dimension 20, we get that the Picard rank, $\rho(X) := \rk(\Pic(X))$, is in the range $0\leq \rho(X)\leq 20$. For projective K3 surfaces, $1\leq \rho(X)\leq 20$.

Hodge index theorem says that for a compact K\"ahler surface, the intersection pairing 
\begin{equation}
H^2(X,\bR)\times H^2(X,\bR)\to \bR, \ \ \ a.b \mapsto \int_X a\wedge b
\end{equation}
restricted to $H^{1,1}(X,\bR)$ has signature $(1, h^{1,1}(X)-1)$.   One can also define an intersection pairing 
\begin{equation} \label{eq: NSpairing}
\Pic(X) \times \Pic(X)\to \bZ, \ \ \ L.L' =\int_X c_1(L)\wedge c_1(L').
\end{equation}
A line bundle $L\in \Pic(X)$  is called numerically trivial if $L. L'=0$ for all line bundles $L'$.  For complex projective surfaces, a line bundle is numerically trivial if and only if $c_1(L)=0$.  So for projective K3 surfaces, since the kernel of $c_1$ is trivial, {only the trivial bundle is numerically trivial}.  Hence for projective K3 surfaces,  the intersection pairing \eqref{eq: NSpairing} on the N\'eron-Severi group is nondegenerate and has signature $(1, \rho(X)-1)$.

\subsubsection{Twistor family}\label{sec: twistor} Let $(X,J)$ be a K3 surface, by \cite{Yau78}, any K\"ahler class in $H^{1,1}(X,\bR)\subset H^2(X,\bR)\cong  \Lambda_\bR$ has a unique representative $\omega$ whose associated metric $g(\cdot, \cdot)=\omega_J(\cdot, J\cdot)$ is Ricci-flat, {i.e.~with vanishing Ricci curvature.}

Denote by $\Omega$ a nowhere vanishing holomorphic 2-form. Note that  $\re\Omega$ and $\im \Omega$ are orthogonal to $\omega$ because $\Omega\in H^{2,0}(X)$, $\overline \Omega\in H^{0,2}(X)$, and $H^{2,0}(X)\oplus H^{0,2}(X)$ is orthogonal to $H^{1,1}(X)$.  Also, as in Equation \eqref{eq: period domain}, $(\re\Omega)^2=(\im\Omega)^2>0$ and $\re\Omega.\im\Omega=0$. So
\begin{equation}\label{eq: P}
P:=\mathrm{Span}\{\re\Omega, \im\Omega, \omega\}\subset H^2(X,\bR)\cong \Lambda_\bR
\end{equation}
is a positive-definite 3-plane in $\Lambda_\bR\cong \bR^{3,19}$.  If we assume the normalization $d\vol:=\frac{1}{2}\Omega\wedge \overline{\Omega}$ and $\int_Xd\vol=1$, then this is  equivalent to  $K_\Omega=(\re \Omega)^2=(\im \Omega)^2=1$.  If furthermore the K\"ahler class $[\omega]$ we have satisfies  $\int_X\omega^2=1$, then $\{\re\Omega, \im\Omega, \omega\}\subset \bS^2(P)$ form an orthonormal basis for $P$.

%The Ricci-flatness means that the holonomy group of $g$ is in $\mathrm{SU}(2)$.  Because $\mathrm{SU}(2)=\mathrm{Sp}(1)$, $g$ is a hyperk\"ahler metric.  Consequently, $X$ admits a two-sphere worth of complex structures $J_t$ that are compatible with $g$.  Each associated K\"ahler form $\omega_t$, given by $\omega_t(\cdot,\cdot)=g(J_t\cdot, \cdot)$, is  a point in the the unit two sphere $\bS^2(P)$ of $P$.  With respect to each $J_t$, we have a normalized holomorphic two form $\Omega_t$.  For example, when $\omega_t=\re \Omega$, $\Omega_t=\im \Omega+i\omega$, and when $\omega_t=\im \Omega$, $\Omega_t = \omega+i\re\Omega$.  This two-sphere family is known as the twistor family and the 3-plane $P$ is known as the twistor plane.  

The Ricci-flatness means that the holonomy group of $g$ is in $\mathrm{SU}(2)$.  Because $\mathrm{SU}(2)$ is isomorphic to the group $\mathrm{Sp}(1)$ of unitary quaternions, $g$ is a hyperk\"ahler metric.  Consequently, $X$ admits a two-sphere $\bS^2$ family, known as the twistor family, of complex structures that are compatible with $g$ and we now describe them.  Denote by $\omega_J$ and $\Omega_J$ the forms on $(X,J)$, and assume they are normalized as above.  Let $\omega_K=\re(\Omega_J)$ and $\omega_I=\im(\Omega_J)$.  Let $I$ be the complex structure on $X$ that is compatible with $(g,\omega_I)$, i.e. $\omega_I(\cdot,\cdot)=g(I\cdot,\cdot)$.  Similarly, let $K$ be the complex structure compatible with $(g, \omega_K)$.
With respect to $I$, $\Omega_I=\omega_J+i\omega_K$ is a holomorphic 2-form.  Similarly, with respect to $K$, $\Omega_K= \omega_I+i\omega_J$ is a holomorphic 2-form.   The complex structures $I, J, K$ satisfies the quaternionic commutation relation 
$I^2=J^2=K^2=I\cdot J\cdot K =-1$. Then, any complex structure $J_t$, $t=(x, y, z)$,  in the following two sphere
\begin{equation}
\{J_t=xI +y J + zK\  | \ x^2+y^2+z^2=1\} =\bS^2
\end{equation}
is compatible with the metric $g$, i.e. $g(\cdot, \cdot)=g(J_t \cdot, J_t\cdot)$.  For each $t\in \bS^2$, denote by $\omega_t$ the associated K\"ahler form given by  $\omega_t(\cdot,\cdot)=g(J_t\cdot, \cdot)$, and denote by $\Omega_t$ a normalized nowhere vanishing holomorphic 2-form with respect to $J_t$ {(there is a $S^1$-worth of compatible choices for such $\Omega_t$)}.  Then $\{\omega_t, \re\Omega_t, \im\Omega_t\}\subset \bS^2(P)$, for $P$ in Equation \eqref{eq: P} and $P$ is known as the twistor plane.
 
Conversely, given any positive definite 3-plane $P\subset H^2(X,\bR)$, there is an associated twistor family on $X$.

\subsection{Coherent sheaves} In this section, we assume $X$ is projective.  Denote by $\cD:= \cD^b \Coh(X)$ the bounded derived category of coherent sheaves on $X$. 
\subsubsection{Mukai lattice}
  Denote by $K(\cD)$ the Grothendieck group, which admits an Euler pairing 
\begin{equation}
\chi(E, F) :=\sum\limits_j (-1)^j \dim \Hom_{\cD}^j(E, F).
\end{equation}
Serre duality implies that the Euler pairing is symmetric:
$\chi(E,F)=\chi(F,E)$.

The Mukai vector is a map 
 \begin{equation}
 v: K(\cD) \to H^*(X,\bZ)=H^0(X,\bZ)\oplus H^2(X,\bZ)\oplus H^4(X,\bZ)
 \end{equation}
  given by
\begin{equation}\label{eqn: mukai vector}
v(E): =\ch(E)\sqrt{\td(X)}  =  (\rk(E), \ c_1(E), \ \chi(E)-\rk(E)).
\end{equation}
The Hirzebruch-Riemann-Roch formula implies
\begin{equation} \label{eqn: euler mukai}
\chi(E, F) = -\l v(E), v(F)\r,
\end{equation}
where 
\begin{equation}
\begin{array}{c}
\l-,-\r: H^*(X,\bZ)\times H^*(X,\bZ)\to \bZ,  \\
\l (r_1, D_1, s_1), (r_2, D_2, s_2)\r=  D_1. D_2- r_1.s_2-r_2.s_1
\end{array}
\end{equation}
is the Mukai pairing.  Notation-wise, we will always use $(-.-)$ to denote the intersection pairing and $\l-,-\r$ to denote the Mukai pairing.   Note that the Mukai pairing on $H^*(X,\bZ)$  restricted to $H^2(X,\bZ)$ is the same as the intersection pairing.  

We call $H^*(X,\bZ)$, equipped with the Mukai pairing, the Mukai lattice.  According to (\ref{eqn: euler mukai}), the Mukai vector given by (\ref{eqn: mukai vector}) is a map
 \begin{equation}\label{eq: mukai map}
 v: (K(\cD), -\chi(-, -)) \longrightarrow (H^*(X,\bZ),\langle-, -\rangle).
 \end{equation}
 The Mukai lattice is an extension of the K3 lattice on $H^2(X,\bZ)$ so that $H^0(X,\bZ)\oplus H^4(X,\bZ)$ isometric to a hyperbolic plane.  Hence, the Mukai lattice is isometric to
\begin{equation}
\Lambda_{\Muk}=U\oplus\Lambda= U^{\oplus 4}\oplus (-E_8)^{\oplus 2},
\end{equation}
which has signature $(4, 20)$.  

The numerical Grothendieck group is defined to be the quotient 
\begin{equation}
N(\cD)=K(\cD)/\ker \chi(-, -).
\end{equation}
By this definition, the Mukai pairing on $N(\cD)$ is nondegenerate. {When $\dim_\bC(X)=2$, it is known that the Chern character map, and so the Mukai vector map $v$, descends to a map on $N(\cD)$, and this map is injective.}     So it identifies $N(\cD)$ with its image 
\begin{equation}\label{eq: N mukai identification}
(N(\cD), -\chi(-,-))  \cong \left(H^0(X,\bZ)\oplus \NS(X) \oplus H^4(X, \bZ),\  \l-, -\r \right)\subset (H^*(X,\bZ), \langle-,-\rangle). 
\end{equation}
The numerical Grothendieck group with the Mukai pairing is a lattice with signature $(2, \rho(X))$, where $\rho(X)=\rk(\NS(X))$ is the Picard rank.

\subsubsection{Stability conditions} \label{sec: stability background} The space of locally finite numerical Bridgeland stability conditions, $\Stab(\cD)$, on $\cD:=\cD^b \Coh(X)$ consists of all pairs $\sigma=(Z, \cP)$, where 
 \begin{itemize}
 \item $Z: N(\cD)\to\bC$ is a group homomorphism called the central charge, and 
 \item $\cP:=\{\cP(\phi)\}_{\phi\in \bR}$ is a collection of full additive subcategories of $\cD$, one for each $\phi\in \bR$ (objects in $\cP(\phi)$ are called the $\sigma$-semistable objects of phase $\phi$),
 \end{itemize}
 such that:
 \begin{enumerate}
 \item if $0\neq E\in \cP(\phi)$, then $Z(E)\in \bR_{>0}\cdot e^{i\pi\phi}$;
 \item $\cP(\phi+1)=\cP(\phi)[1]$;
 \item  if $\phi_1>\phi_2$ and $E_j\in \cP(\phi_j)$,  then $\Hom(E_1, E_2)=0$;
 \item (Harder-Narasimhan filtration) for each $0\neq E \in \cD$, there exists a collection of triangles
\[
\xymatrix@C=.4em{
0 \ar@{==}[r] & E_{0} \ar[rrrr] &&&& E_{1} \ar[rrrr] \ar[dll] &&&& E_{2}
\ar[rr] \ar[dll] && \cdots \ar[rr] && E_{k-1}
\ar[rrrr] &&&& E \ar[dll]  \\
&&& B_{1} \ar@{-->}[ull] &&&& B_{2} \ar@{-->}[ull] &&&&&&&& B_k \ar@{-->}[ull] 
}
\]
with $B_j\in \cP(\phi_j)$ and $\phi_1>\phi_2>\cdots >\phi_k$;
 \item (support property) there is a constant $C>0$ and a norm $\| \cdot \|$ on $N(\cD)\otimes_\bZ \bR$ such that for any semistable object $E$, we have $\| E\| \leq C|Z(E)|$.
  \end{enumerate}
 
 {Note that the support property implies the set $\{Z(E)\}$ of central charges  of $\sigma$-semistable objects are discrete in $\bC$ and has no accumulation point. Indeed, otherwise there would be a sequence of classes $[E_n]\in N(\cD)$ supporting $\sigma$-semistable objects such that $\lim_{n\rightarrow\infty}|Z(E_n)|$ is finite. Together with $\limsup_{n\rightarrow\infty}\|[E_n]\|=\infty$, this contradicts with the support property.
 }

There is a left action on $\Stab(\cD)$ by the group of triangulated autoequivalences $\Aut(\cD)$ of $\cD$, and there is a right action on $\Stab(\cD)$ by $\widetilde{\GL^+(2;\bR)}$ (the universal cover of $\GL^+(2,\bR)$)  which descends to a $\GL^+(2,\bR)$ action on $Z : N(\cD) \to (\bC \cong \bR^2)$ by post-composition.  {Note that by the Gram-Schmidt procedure, $\GL^+(2,\bR)$ deformation retracts onto $SO(2,\bR)\cong S^1$, so $\pi_1\left(\GL^+(2,\bR)\right)\cong\bZ$, with the generator of the fundamental group acting by $e^{2\pi i}$.}

The non-degeneracy of the Mukai pairing gives an identification 
\begin{equation}\label{eq: central charge identification}
\Hom(N(\cD), \bC)\cong N(\cD)_\bC:=N(\cD) \otimes \bC, \quad Z\mapsto \varphi, \quad Z(v)=\l \varphi, v\r.   
\end{equation}
Bridgeland \cite{BridgelandAnnals} showed that $\Stab(\cD)$ has the structure of a complex manifold, and the forgetful map,
\begin{equation}\label{eq: forgetful map}
\pi: \Stab(\cD) \to \Hom(N(\cD), \bC)\cong N(\cD)_\bC, \quad \sigma=(Z,\cP) \mapsto Z,
\end{equation}
gives a local isomorphism.

Of special interest is the subset $U(\cD)\subset \Stab(X)$ of geometric stability conditions. 
\begin{defn}\label{def: geometric stab}
A \emph{geometric stability condition} is a $\sigma\in \Stab(X)$ for which all skyscraper sheaves, $\cO_x$ where $x\in X$, are $\sigma$-stable and of the same phase. 
\end{defn}
\noindent {Note that $U(\cD)$ is connected, due to} \cite[Proposition 11.2 and the discussion after Corollary 11.3]{BridgelandK3}.  Let $\Stab^\dagger(\cD)$ be the connected component of $\Stab(\cD)$ that contains $U(\cD)$. Bridgeland \cite{BridgelandK3} showed that there is some autoequivalence of $\cD$ that maps $\Stab^\dagger(\cD)$ into the closure of $U(\cD)\subset \Stab^\dagger(\cD)$. 

Furthermore,  \cite{BridgelandK3} showed that for any $\sigma\in U(\cD)$, there is a unique $\tilde g\in \widetilde{\GL^+(2,\bR)}$ such that $\sigma\cdot \tilde g\in V(\cD)$, where 
\begin{equation}\label{eq: def V}
V(\cD)=\{\sigma\in U(\cD)\mid \pi(\sigma)\in\cQ(X),\  \cO_x \text{ is $\sigma$-stable of phase 1 } \forall x\in X\},
\end{equation}
where $\cQ(X)\subset N(\cD)\otimes \bC$ is the set of vectors whose real and imaginary part spans positive 2-planes in $N(\cD)\otimes \bR$.
{It is worth mentioning that $\cQ(X)$ is closely related to the period domain associated to $N(\cD)$}, and can be identified with 
\begin{equation} 
\cQ(X) = \{\varphi=\exp(B+i\omega) \ | \ B+i\omega\in \NS\otimes \bC, \ \omega^2>0\}\subset N(\cD)_\bC.
\end{equation}
Note that 
\begin{equation}
\varphi=\exp(B+i\omega)=\left(1, B, \frac{B^2-\omega^2}{2}\right)+i\Big(0, \omega, B.\omega\Big).
\end{equation}

Bridgeland \cite{BridgelandK3} also showed that the forgetful map $\pi$ is injective when restricted to $V(\cD)$.  The image of 
\begin{equation}
\pi |_{V(\cD)}: V(\cD) \stackrel{\sim}{\longrightarrow} L(\cD)
\end{equation}
is
\begin{equation}
L(\cD)=\{\varphi\in \cK(X)\ |\ \text{{$\l\varphi, \delta\r \notin \bR_{\leq 0}$}} \text{ for all } \delta\in \Delta^+(N(\cD)) \}\subset \cK(X),
\end{equation}
where
\begin{equation}
\cK(X):=\{\varphi=\exp(B+i\omega)\in \cQ(X) \ | \ \omega\in \Amp(X)\}\subset \cQ(X)
\end{equation}
and
\begin{equation}
\Delta^+(N(\cD))=\{\delta=(r,D,s) \in N(\cD) \ |\ \l \delta, \delta\r=-2, \ r>0\}.
\end{equation}
So the forgetful map identifies stability conditions $\sigma=(Z, \cP)\in V(\cD)$ with its image $\varphi=\exp(B+i\omega)\in L(\cD)$ for some $B,\omega \in \NS(X)_\bR$ with $\omega\in \Amp(X)$.  By  (\ref{eq: central charge identification}), this $\varphi=\exp(B+i\omega)$ is identified with the central charge homomorphism
 \begin{equation}\label{eq: Z for V} 
 Z(v)= \l \re(\varphi), v \r+i\l \im(\varphi), v\r 
   = \left\l \left(1, B, \frac{B^2-\omega^2}{2}\right), v\right\r + i\Big\l (0, \omega, B.\omega), v\Big\r.
 \end{equation}

\subsection{Special Lagrangian classes} \label{sec: sLag background} Given a K\"ahler class in $H^{1,1}(X,\bR)$, it is uniquely represented by a Ricci-flat K\"ahler form $\omega$.  Below, we use the same notation $\omega$ to denote either the K\"ahler form or the K\"ahler class, depending on the context. For a Lagrangian submanifold $L$ in $(X,\omega)$, let $[L]^{Pd}\in H^2(X,\bZ)$ be the Poincare dual of the homology class of $L$, then we have the intersection pairing $[L]^{Pd}.\omega=\int_{L}\omega|_L=0$.
Conversely, by (\cite{SW01}, Corollary 2.4), if $\gamma\in H^2(X,\bZ)$ such that $\gamma.\omega=0$, then it represents an immersed Lagrangian submanifold.  Hence $\gamma\in H^2(X,\bZ)$ represents an immersed Lagrangian submanifold if and only if $\gamma$ belongs to
\begin{equation}\label{eq: Lag def}
\Lag(X,\omega):=H^2(X,\bZ)\cap \omega^{\perp} \subset H^2(X,\bZ). 
\end{equation}
We call $\Lag(X,\omega)$, with the intersection pairing, the Lagrangian class lattice. 

In addition to the K\"ahler form $\omega$, if we also take a nonvanishing holomorphic 2-form $\Omega$ on $X$ into account, then we can  define special Lagrangian (sLag) submanifolds of phase $\phi$, which are Lagrangian submanifolds $L$ such that $\iota^*\Omega=e^{i\phi}d\vol|_L$, where $\phi$ is a constant and $d\vol$ is the volume form induced by the K\"ahler metric determined by $\omega$.  Define the set of irreducible special Lagrangian classes with respect to $\omega, \Omega$ to be
\begin{equation}
    \SLag(X,\omega, \Omega)=\{\gamma\in H^2(X,\bZ): \exists\  \text{irreducible } \sLag 
    \ L \text{ with } [L]^{Pd}=\gamma\}.
\end{equation}
{Using the hyperk\"ahler rotation and standard facts about holomorphic curves, one has the following lemma.}
\begin{lemma}[Lai-Lin-Schaffler, {\cite[Lemma 2.2]{LaiLinSchaffler}}] \label{lemma: sLag}
$$\SLag(X,\omega, \Omega)\subset\{\gamma\in \Lag(X,\omega) : \gamma^2\geq -2\}.$$
\end{lemma}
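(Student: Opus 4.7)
The plan is to establish both assertions in the set inclusion separately: first that every class represented by an irreducible special Lagrangian lies in $\Lag(X,\omega)$, and second that its self-intersection number is at least $-2$.

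\emph{Step 1 (membership in $\Lag$).} Suppose $\gamma = [L]^{Pd}$ for an irreducible special Lagrangian $L$. Since $L$ is in particular Lagrangian, $\omega|_L = 0$, so the intersection pairing gives
\[
\gamma . \omega = \int_L \omega|_L = 0,
\]
which places $\gamma$ in $H^2(X,\bZ)\cap \omega^\perp = \Lag(X,\omega)$ by the definition recalled in Equation \eqref{eq: Lag def}.

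\emph{Step 2 (self-intersection bound).} The key geometric input is that an irreducible special Lagrangian in a Calabi-Yau surface is, on its smooth locus, a compact connected oriented real surface $L$; I would first justify that for the argument below we may treat $L$ as smooth (either by restricting attention to smooth sLags, or by invoking the regularity theory for calibrated currents on four-manifolds, which yields at most isolated singularities that do not affect the computation of the topological self-intersection). Then I would invoke the standard fact that for a Lagrangian surface in a symplectic $4$-manifold the symplectic form induces an isomorphism of oriented real rank-$2$ bundles
\[
N_{L/X} \;\cong\; T^*L,
\]
so that taking Euler numbers gives the identity $[L]^{Pd}\cdot[L]^{Pd} = e(N_{L/X}) = e(T^*L) = -\chi(L)$.

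\emph{Step 3 (topological conclusion).} Writing $g$ for the genus of the connected oriented surface $L$, one has $\chi(L) = 2-2g \le 2$, and therefore
\[
\gamma^2 \;=\; [L]^{Pd}\cdot[L]^{Pd} \;=\; -\chi(L) \;=\; 2g-2 \;\ge\; -2,
\]
with equality exactly when $L$ is a $2$-sphere. Combined with Step~1 this proves the claimed inclusion.

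The main obstacle I anticipate is the regularity issue in Step~2: special Lagrangians can in principle have singularities, and one must confirm that the formula $[L]^{Pd}\cdot[L]^{Pd} = -\chi(L)$ continues to hold (or can be suitably corrected) for such singular representatives, and that the notion of irreducibility guarantees a single topological component so that the Euler-characteristic bound $\chi(L)\leq 2$ is valid. Fixing orientation conventions so that the isomorphism $N_{L/X}\cong T^*L$ really gives $e(N_{L/X}) = -\chi(L)$ (rather than $+\chi(L)$) is a secondary but crucial bookkeeping point.
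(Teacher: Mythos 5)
Your Step 1 is correct and unproblematic. The real issue is that the paper does not prove this lemma at all --- it quotes it from Lai--Lin--Schaffler, whose argument is hyperk\"ahler rotation plus adjunction --- and your Steps 2--3 contain a genuine gap exactly at the point you flag and then wave away. The claim that the singularities allowed for calibrated representatives ``do not affect the computation of the topological self-intersection'' is false. Concretely: an irreducible rational curve with one node in a K3 surface (which, after hyperk\"ahler rotation, is precisely an example of an irreducible \emph{singular} special Lagrangian) has normalization $S^2$ with $\chi=2$, yet its class satisfies $\gamma^2=0$, not $-2$; each double point shifts $\gamma^2$ by $+2$ relative to $-\chi$. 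So the identity $\gamma^2=-\chi(L)$ you rely on simply fails in the singular/immersed case. Worse, for a merely Lagrangian immersion the double points carry signs, so an immersed Lagrangian sphere with one \emph{negative} double point would have $\gamma^2=-4<-2$; nothing in your argument rules this out for a singular special Lagrangian. The inequality survives only because singularities of \emph{special} Lagrangians are modeled on holomorphic ones, whose local contributions are non-negative --- and establishing that is tantamount to the rotation argument you have omitted. Note also that the paper explicitly works with immersed Lagrangians (via the cited result of Schoen--Wolfson), so the singular case is squarely within the scope of the statement.

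The clean route, which is the one behind the cited lemma, is: with respect to the Ricci-flat metric underlying $(\omega,\Omega)$, a special Lagrangian of phase $\phi$ is calibrated by $\re(e^{i\phi}\Omega)$, hence is a holomorphic curve $C$ for a rotated complex structure in the twistor family (this is where ``special'', not just Lagrangian, enters). Irreducibility of $L$ then means $C$ is an irreducible (reduced) curve, and adjunction on a K3 surface, where $K_X$ is trivial, gives $\gamma^2=[C]^2=2p_a(C)-2$ with arithmetic genus $p_a(C)\geq 0$; singularities only increase $p_a$, so $\gamma^2\geq -2$ with no smoothness hypothesis. Your computation via $e(N_{L/X})=-\chi(L)$ (whose sign you do have right) is the correct specialization of this to smooth embedded $L$, where $p_a(C)$ equals the genus, but as written your proof covers only that case.
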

\noindent Note that notation-wise for us $\SLag(X,\omega, \Omega)$ consists of only the classes represented by special Lagrangians that are irreducible.
The same notation in \cite{LaiLinSchaffler} does not require irreducibility, but the above lemma is indeed about irreducible Lagrangians.

\begin{rmk}\label{rmk: Lag signature} By \cite[Proposition~3.3]{LaiLinSchaffler}, given a lattice $\Lambda'\subsetneq\Lambda\cong H^2(X,\bZ)$, there exists a K\"ahler K3 surface $(X,\omega)$ such that $\Lag(X,\omega)\cong \Lambda'$ if and only if $\Lambda'$ is proper, saturated, and such that $(\Lambda')^\perp\subseteq\Lambda$ contains a vector with positive self-intersection. Hence in general we don't have control on the signature of $\Lag(X,\omega)$; also, the lattice could be degenerate. 
\end{rmk}

\begin{rmk}\label{rmk: hms} In this remark, we briefly describe the numerical Grothendieck group of the derived Fukaya category of $X$, which is expected to correspond to $N(\cD(Y))$ if $X$ and $Y$ are mirror K3s.  See \cite{SS20} for a more detailed exposition. Denote by $\Fuk(X)$ the Fukaya category of $X$, whose objects are unobstructed immersed Lagrangian submanifolds.   It is  an $A_\infty$-category that is linear over the Novikov field $\mathbb F$ over $\bC$ given by $
\mathbb F:= \big\{\sum\limits_{j=0}^{\infty}a_j q^{\lambda_j} \ | \ a_j\in \bC, \lambda_j\in \bR,\ \lim\limits_{j\to \infty} \lambda_j=+\infty\big\}$.  Denote by $\cF:=\cF(X)$ the triangulated category given by the split-closures of the category of twisted complexes on the Fukaya category. Let $K(\cF)$ be the Grothendieck group, then there is a composition of the Chern character map and the open-closed map 
\begin{equation} \label{eqn: ch oc}
K(\cF) \stackrel{\ch}{\longrightarrow} HH_0(\cF) \stackrel{ OC}{\longrightarrow} H^2(X, \mathbb F).
\end{equation}
Shklyanov \cite{Shk13} introduced Mukai pairing for dg categories and proved that the Mukai pairing on $HH_0(\cF)$ satisfies
\begin{equation}
\l \ch(E), \ch(F)\r = -\chi(E,F).
\end{equation}
{An object in $K(\cF)$ needs not be  geometric}, but when it is, i.e. when it can be represented by a unobstructed immersed Lagrangian submanifold, \cite[Lemma 5.13]{SS20} shows that (\ref{eqn: ch oc}) is the map $[L] \mapsto [L]^{Pd} \in \Lag(X,\omega)\subset H^2(X,\bZ)$. When $L_1, L_2$ are immersed Lagrangian submanifolds, their Euler pairing is the Euler characteristics of the Floer cohomology, and we see that the Mukai pairing coincides with the intersection pairing on $H^2(X,\bZ)$
\begin{equation}
\begin{array}{ll}
\l \ch(L_1), \ch(L_2) \r = - \chi(L_1, L_2) & =- \chi(HF^*(L_1, L_2)) =  [L_1].[L_2].
\end{array}
\end{equation}
Let $N(\cF):= K(\cF)/\ker \chi(-,-)$ be the numerical Grothendieck group, then the map $K(\cF)\to H^2(X,\mathbb F)$ descends to an injective map $N(\cF)\to H^2(X,\mathbb F).$ If $X$ and $Y$ are mirror K3 surfaces, then the expectation is that 
\begin{equation}
N(\cF(X))=N(\cD(Y)),
\end{equation}
so then $N(\cF(X))$ would have signature $(2,\rho)$ where $\rho=\rk(\Pic(Y))$.  It is not known whether $N(\cF(X))$  can be identified with $\Lag(X,\omega)$, 
{see} \cite[Remark~7.6]{SS20} {for more detailed discussions}.
\end{rmk}

\section{Counting semistable Mukai vectors}\label{sec: count semistable objects}
In this section we explain Theorem \ref{thm: count stable objects}.  Let $X$ be a complex algebraic $K3$ surface,  denote by $\cD:=D^b\Coh(X)$ be the bounded derived category of coherent sheaves on $X$, and denote by $N(\cD)$ the numerical Grothendieck group, which we view as the signature $(2,\rho(X))$ Mukai lattice $H^*(X,\bZ)$ via the identification given by Equation \eqref{eq: N mukai identification}.  To proceed, please refer to the notations introduced in Section \ref{sec: stability background}. Fix a Bridgeland stability condition  $\sigma\in \Stab^\dagger(\cD)$ with central charge  {$Z_\sigma\colon N(\cD)\ra \bC$.} We are interested in the following counting function of semistable classes with bounded charges 
\begin{equation}
N_\sigma(R)=\#\{v \in N(\cD) \colon|Z_\sigma(v)|\leq R\text{ and }
\exists\ \sigma\text{-semistable object }E\text{ with }[E]=v\},
\end{equation}
where $[E]$ is the image of $E$ under the Mukai map in Equation \eqref{eq: mukai map}.
Because by \cite{BridgelandK3},  there is some autoequivalence of $\cD$ that maps $\Stab^\dagger(\cD)$ into the closure of $U(\cD)$, we will just do the count for geometric stability conditions.  As mentioned in Section \ref{sec: stability background}, for any stability condition $\sigma\in U(\cD)$, there is a unique $\tilde g\in \widetilde{\GL^+(2,\bR)}$ such that $\sigma\cdot \tilde g\in V(\cD)$, which corresponds to $\pi(\sigma\cdot \tilde g)\in L(\cD)$ via the isomorphism $\pi|_{V(\cD)}$. So for each element $\pi(\sigma)\in \pi(U(\cD))$, there is a unique $g=\pi(\tilde g)\in \GL^+(2,\bR)$ such that $\pi(\sigma)\cdot g\in L(\cD)$.  So in this section, we will first look into $N_\sigma(R)$ for $\sigma\in V(\cD)$ and then consider other stability conditions in $ U(\cD)$ obtained via $\widetilde{\GL^+(2,\bR)}$ action.

Given a Mukai vector $v$, by \cite{BridgelandK3} there is a locally finite set of walls (which are real codimension one submanifolds) in $\Stab^\dagger(\cD)$ such that so long as $\sigma$ varies within a chamber, the set of $\sigma$-semistable objects with class $v$ does not change.  Following \cite{BayerMacri}, we make the following definition.
\begin{defn}\label{def: generic}
A stability condition is \emph{generic} with respect to $v$ if it does not lie on a wall. 
\end{defn} 
\noindent Almost every point $\sigma\in \Stab^\dagger(\cD)$ is generic with respect to all Mukai vectors.  Indeed, each Mukai vector gives a locally finite set of walls, and there are countably many Mukai vectors, so locally the union of all walls is a measure zero set in $\Stab^\dagger(\cD)$. The following useful theorem characterizes Mukai vectors of semistable objects.
\begin{thm}[Bayer-Macr\`{i},  {\cite[Theorem 2.15]{BayerMacri}}] \label{thm: BayerMacri} Let $v=mv_0$ be a Mukai vector with $v_0$ primitive and $m>0$, and let $\sigma\in \Stab^\dagger(\cD)$ be a generic stability condition with respect to $v$, then $v$ supports a semistable object if and only if $v_0^2\geq -2$.  
\end{thm}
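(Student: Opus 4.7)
The plan is to prove the two directions separately: the forward direction is a direct consequence of Serre duality on the K3 surface, while the reverse direction is the substantive part and relies on classical existence results for Gieseker-semistable sheaves together with a wall-crossing analysis in $\Stab^\dagger(\cD)$.

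For necessity, suppose $E$ is $\sigma$-semistable of class $v=mv_0$. Genericity of $\sigma$ with respect to $v$ (Definition \ref{def: generic}) forces the Mukai vectors of the Jordan--H\"older factors of $E$ to lie on the ray $\bR_{>0}\cdot v_0$, because a factor with non-proportional class would place $\sigma$ on a wall for $v$. Primitivity of $v_0$ then implies each Jordan--H\"older factor has class $n_iv_0$ for some $n_i\in \bZ_{>0}$. Pick one such $\sigma$-stable factor $F$, of class $nv_0$. Stability gives $\Hom(F,F)=\bC$, and Serre duality on the K3 surface (the canonical bundle is trivial) gives $\Ext^2(F,F)\cong\Hom(F,F)^{\vee}=\bC$, so $\chi(F,F)\leq 2$. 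By the Mukai--Riemann--Roch identity \eqref{eqn: euler mukai}, $\chi(F,F)=-n^2v_0^2$, yielding $v_0^2\geq -2/n^2 \geq -2$.

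For sufficiency I would carry out a two-step reduction: first produce a semistable object of class $v$ for some judiciously chosen $\sigma_0\in V(\cD)$, then propagate the existence to arbitrary generic $\sigma\in \Stab^\dagger(\cD)$. For the initial existence, use autoequivalences of $\cD$ (compositions of Fourier--Mukai functors and spherical twists, which act with sufficient transitivity on the Mukai lattice $N(\cD)$) to move $v_0$ to a class of the form $(r,D,s)$ with $r>0$; then pick $\sigma_0$ in the large-volume ``Gieseker chamber'' of $V(\cD)$, where $\sigma_0$-semistability for torsion-free sheaves coincides with Gieseker semistability. Yoshioka's non-emptiness theorem for moduli of Gieseker-semistable sheaves with primitive Mukai vector of square $\geq -2$ supplies a sheaf $F_0$ of class $v_0$, and then $F_0^{\oplus m}$ is $\sigma_0$-semistable of class $v$. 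Applying the inverse autoequivalence returns a $\sigma_0$-semistable object of the original class $v$. The propagation step uses that any two generic stability conditions in $\Stab^\dagger(\cD)$ can be joined by a path crossing only finitely many walls for $v$; openness of semistability in $\sigma$ combined with preservation of $S$-equivalence classes of polystable objects across walls transports the existence along the path to the target $\sigma$.

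The main obstacle will be the wall-crossing step in the sufficiency argument. When $\sigma$ crosses a wall for $v$, the moduli space $M_\sigma(v)$ can undergo a birational surgery, so it is not automatic that \emph{some} object of class $v$ remains semistable on the other side. A rigorous treatment would require that, for each $\sigma$-polystable object on the wall, one produces a $\sigma'$-polystable replacement $\bigoplus F_i$ with $\sum v(F_i)=v$ whose summands remain $\sigma'$-semistable of the same phase on the opposite side; this is precisely where the full Bridgeland--Bayer--Macr\`i theory of locally finite wall-and-chamber decompositions enters, and where the primitivity-plus-$v_0^2\geq -2$ hypothesis becomes essential to prevent all polystable representatives from being destroyed. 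By contrast, passing from primitive $v_0$ to $v=mv_0$ is harmless: direct sums of $\sigma$-semistable objects of the same phase are $\sigma$-semistable.
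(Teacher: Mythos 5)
The paper does not actually prove this statement: it is imported wholesale from Bayer--Macr\`i \cite[Theorem 2.15]{BayerMacri} and used as a black box, so there is no in-paper argument to compare yours against; your proposal has to be judged on its own merits. Your necessity direction is correct and complete: genericity forces the Jordan--H\"older factors to have classes proportional to $v_0$, hence (by primitivity and integrality) of the form $nv_0$; a stable factor $F$ is simple, Serre duality with trivial canonical bundle gives $\Ext^2(F,F)\cong\bC$, so $\chi(F,F)\le 2$, and \eqref{eqn: euler mukai} yields $n^2v_0^2\ge -2$, hence $v_0^2\ge -2$. Your skeleton for sufficiency (move $v_0$ to positive rank by an autoequivalence preserving $\Stab^\dagger(\cD)$, invoke Yoshioka's non-emptiness theorem in the Gieseker chamber, then transport to an arbitrary generic $\sigma$) is also the correct strategy, and is essentially the one Bayer--Macr\`i follow.

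The genuine gap is the transport step, and the mechanism you propose for it would fail. There is no ``preservation of $S$-equivalence classes of polystable objects across walls'': crossing a wall really can destabilize every $\sigma$-polystable object of class $v$, and openness of semistability in $\sigma$ only propagates existence within a chamber and up to its closure, never across the wall. What closes this gap in the literature is not an object-by-object replacement argument but Toda's wall-crossing theorem for counting invariants: the Joyce-type invariant $J(v)$ defined via the motivic Hall algebra is independent of the stability condition, it is computed to be nonzero in the Gieseker chamber when $v_0^2\ge -2$ (by Yoshioka), and a nonzero invariant forces the set of $\sigma$-semistable objects of class $v$ to be non-empty for every generic $\sigma$. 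Since you explicitly flag this step as the obstacle rather than supply such an input, your sufficiency direction reduces the theorem to its hardest ingredient without proving it. Two smaller points also need justification: the autoequivalence you use must preserve the distinguished component $\Stab^\dagger(\cD)$ and carry generic stability conditions to generic ones (true for shifts, line-bundle twists, and the spherical twist at $\cO_X$, but it must be said), and the claim that such autoequivalences act transitively enough to make any primitive $v_0$ have positive rank requires an explicit lattice computation (e.g.\ the twist at $\cO_X$ sends $(0,0,1)$ to a rank-one vector).
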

 \noindent For a $\sigma\in \Stab^\dagger(\cD)$ that is generic with respect to all Mukai vectors, Theorem \ref{thm: BayerMacri} immediately gives that 
\begin{equation}
N_\sigma(R) =\#\{v \in N(\cD) \colon  
 v= mv_0,  m\in \bZ_+, v_0 \text{ primitive, } v_0^2\geq -2, |Z_\sigma(v)|\leq R\}.
\end{equation}
Note that there is no $v\in N(\cD)$ with $v^2=-1$ because the Mukai lattice is even, so
\begin{multline}\label{eq: N sphere}
N_\sigma(R)=\left(\#\{v\in N(\cD) \colon v^2\geq 0, |Z_\sigma(v)|\leq R\}\right)\\ 
 +\left(\#\{v\in N(\cD):  v=mv_0, m\in \bZ_+, v_0^2=-2, |Z_\sigma(v)|\leq R \}\right).
\end{multline}
In the equation above, we didn't explicitly specify that $v_0$ is primitive, but it is, since any $v_0$ such that $v_0^2=-2$ is primitive.  

Let us first discuss the second term in Equation \eqref{eq: N sphere}.  There is actually an upper bound on how large $m$ can be.  Due to the support {property in the} definition of locally finite numerical Bridgeland stability conditions introduced in Section \ref{sec: stability background}, the systole introduced in \cite{FanSystole}, which is defined as 
\begin{equation}
    \mathrm{sys}(\sigma):=\min\{|Z_\sigma(v(E))|:  E \text{ is a $\sigma$-semistable  object}\},
\end{equation}
is attained and is a positive number, {see} \cite[Remark~2.5]{FanSystole}.
Consequently, if $v=m v_0$ and $|Z_\sigma(v)|<R$, then we must have $m<\frac{R}{\mathrm{sys}(\sigma)}$ since $|Z_\sigma(v)|=m|Z_\sigma(v_0)|$ and  $|Z_\sigma(v_0)|\geq \mathrm{sys}(\sigma)$. So the second term in Equation \eqref{eq: N sphere} is bounded by
\begin{equation}\label{eq: sphere bound}
\begin{split}
&\#\{v\in N(\cD):  v=mv_0, m\in \bZ_+, v_0^2=-2, |Z_\sigma(v)|\leq R \}\\
<&  \frac{R}{sys(\sigma)}\cdot \#\{v\in N(\cD): v^2=-2, |Z_\sigma(v)|\leq R\}\\
=& o(R^{\rho+2})
\end{split}.
\end{equation}
The last line of the above equation is due to the fact that  $\#\{v\in N(\cD): v^2=-2, |Z_\sigma(v)|\leq R\}$ is of order $R^\rho$ as explained in Lemma \ref{lem: -2 class count} below.  So when multiplied with $\frac{R}{\mathrm{sys}(\sigma)}$, overall we have $o(R^{\rho+2})$. 
\begin{lemma}\label{lem: -2 class count}
{The count $\#\{v\in N(\cD): v^2=-2, |Z_\sigma(v)|\leq R\}$ is of order $R^\rho$. }
\end{lemma}
\begin{proof}
This is a consequence of the main theorem of \cite[Theorem 1.2]{DukeRudnickSarnak}, and let us explain that we are indeed in the correct setting to this theorem.  For $v=(r, D=(D_1,\ldots, D_\rho ), s)\in H^0(X,\bZ)\oplus \NS(X)\oplus H^4(X,\bZ)$,
we have  $F(v)=v^2=D^2-2rs$ is an integral polynomial.  So we are looking at the asymptotics as $R\to \infty$ of the number of integer points in the affine homogeneous variety $V=\{x\in \bC^{\rho+2}: F(x)=-2\}$ and such that the positive semidefinite norm given by  $|Z_\sigma(x)|^2$ is less than $R^2$. Note that $F$ is a quadratic form of signature $(2,\rho)$. (This is exactly the setting of \cite[Example 1.5]{DukeRudnickSarnak}, which start out with $F$ being a quadratic form of an arbitrary signature and then go on to analyze a more specialized case where the signature of $F$ is different from ours.)  
The variety $V$ is a symmetric space of the form $\mathrm{SO}(2,\rho-1)\backslash \mathrm{SO}(2,\rho)$. Denote by $H=\mathrm{SO}(2,\rho-1)$.  By the main theorem of \cite{DukeRudnickSarnak}, if $\vol(H(\bZ)\backslash H(\bR))<\infty$, then $\#\{v\in V(\bZ): |Z_\sigma(v)|^2<R^2\}$ is of order $R^\rho$; otherwise, it is of order $R^\rho\log R$. So, in either case, this count is of order $o(R^{\rho+1})$.  For our $H$, we in fact have $\vol(H(\bZ)\backslash H(\bR))<\infty$ due to the fundamental theorem of Borel and Harish-Chandra. 
\end{proof}

Below we will discuss the first term in Equation \eqref{eq: N sphere}.  We will use Lemma \ref{lem: Gauss circle} below, which is a classical result.  

\begin{lemma}\label{lem: Gauss circle} {(Gauss Circle Problem.)} Let $\Upsilon(1)$ be a closed subset of $\mathbb R^n$ with a piecewise smooth boundary of measure zero.  Let $\Upsilon(R)=\{Rx \mid x\in \Upsilon(1)\}$ be the $R$-dilate of $\Upsilon(1)$.  Let $N_\Upsilon(R)=\#\{\Upsilon(R)\cap \bZ^n\}$ be the number of lattice points inside $\Upsilon(R)$.  Then 
\[
\lim_{R\to \infty} \frac{N_\Upsilon(R)}{R^n}=\Vol (\Upsilon(1)).
\]
\end{lemma}

We see that the first term in Equantion \eqref{eq: N sphere} is the count of the number of lattice points in the set 
\[
\Upsilon_\sigma(R)=\{v\in N(\cD)_\bR\cong \bR^{\rho+2}, v^2\geq 0, |Z_\sigma(v)|^2\leq R^2\}.
\]
Because $|Z_\sigma(v)|^2$ is a positive semidefinite quadratic form, $\Upsilon_{\sigma}(1)$ is a closed set with piecewise smooth boundary of measure zero.  {Using Lemma} \ref{lem: Gauss circle} {with $\Upsilon=\Upsilon_\sigma$ and $n=\rho+2$}, we get that $\lim_{R\to \infty} \frac{N_{\Upsilon_\sigma}(R)}{R^{\rho+2}}= \Vol(\Upsilon_{\sigma}(1))$.

Putting the two terms of Equation \eqref{eq: N sphere} together, we have  
\begin{equation}
    N_\sigma(R)=C(\sigma)R^{\rho+2}+o(R^{\rho+2}),
\end{equation}
where 
\begin{equation}\label{eq: Csigma}
      C(\sigma)=\Vol(\Upsilon_\sigma(1))=\Vol\{v\in N(\cD)_\bR: v^2\geq 0, |Z_\sigma(v)|^2\leq 1\}.\\
\end{equation}

\begin{center}
    \fbox{\textbf{Geometric stability conditions of phase 1}}
\end{center}

For $\sigma \in V(\cD)$ a geometric stability condition of phase 1, $Z_\sigma(v)$ is given by Equation \eqref{eq: Z for V}, so according to Equation \eqref{eq: Csigma}, the leading term of $N_\sigma(R)$ is equal to $C(\sigma)R^{\rho+2}$, where
\begin{equation}\label{eq: Csigma phase 1}
C(\sigma)=\Vol\left(\left\{v\in N(\cD)_\bR\colon v^2\geq0,\ \left<\re(\varphi),v\right>^2+\left<\im(\varphi),v\right>^2\leq 1 \right\}\right)
\end{equation}
and
\begin{equation}
\varphi=\exp(B+i\omega)= \left(1, B, \frac{B^2-\omega^2}{2}\right)+ i(0, \omega, B.\omega)\in N(\cD)_\bC.
 \end{equation}
We certainly have $\re(\varphi),\im(\varphi)\in N(\cD)_\bR$, so we can write any $v\in N(\cD)_\bR$ as
\begin{equation}
v=\alpha_1\re(\varphi)+\alpha_2\im(\varphi)+\beta,
\end{equation}
where $\alpha_1,\alpha_2\in\bR$ and $\beta\in\left<\re(\varphi),\im(\varphi)\right>^{\perp_{N(\cD)_\bR}}$. 

Recall that $\{\re(\varphi),\im(\varphi)\}\subseteq N(\cD)_\bR$ forms an orthogonal basis of a positive definite $2$-plane, hence $\left<\re(\varphi),\im(\varphi)\right>^{\perp_{N(\cD)_\bR}}$ is of signature $(0,\rho)$.
Let $\{w_1,\ldots,w_\rho\}$ be a basis of $\left<\re(\varphi),\im(\varphi)\right>^{\perp_{N(\cD)_\bR}}$ such that $\left<w_i,w_j\right>=-\delta_{ij}$.
Then we can write $\beta$ as
\begin{equation}\label{eq: w basis}
\beta=\beta_1w_1+\cdots+\beta_\rho w_\rho \text{ \ for \ } \beta_j\in\bR.
\end{equation}
Then
\begin{equation}\label{eq: alpha beta 1}
v^2\geq0\Longleftrightarrow\omega^2(\alpha_1^2+\alpha_2^2)\geq\beta_1^2+\cdots+\beta_\rho^2,
\end{equation}
and
\begin{equation}\label{eq: alpha beta 2}
\left<\re(\varphi),v\right>^2+\left<\im(\varphi),v\right>^2\leq1\Longleftrightarrow\frac{1}{(\omega^2)^2}\geq\alpha_1^2+\alpha_2^2.
\end{equation}

To compute the volume of $\{\alpha_1,\alpha_2,\beta_1,\ldots,\beta_\rho\}$ satisfying Equations \eqref{eq: alpha beta 1} and \eqref{eq: alpha beta 2}, we introduce a coordinate $r$ such that  $\alpha_1^2+\alpha_2^2=r^2$, then
\begin{equation}
0\leq r\leq\frac{1}{\omega^2} \text{ \ and \ } \beta_1^2+\cdots+\beta_\rho^2\leq\omega^2r^2.
\end{equation}
Hence the volume with respect to $\{\alpha_1,\alpha_2,\beta_1,\ldots,\beta_\rho\}$ is:
\begin{equation}
\begin{split}
&2\pi\int_0^{\frac{1}{\omega^2}}r\cdot\Vol B_\rho(r\sqrt{\omega^2})dr\\
=& 2\pi\int_0^{\frac{1}{\omega^2}}r\cdot\frac{\pi^{\rho/2}(\omega^2)^{\rho/2}r^\rho}{\Gamma(\frac\rho2+1)}dr\\
=&\frac{2\pi^{(\rho+2)/2}}{(\rho+2)\Gamma(\frac\rho2+1)(\omega^2)^{(\rho+4)/2}}.
\end{split}
\end{equation}

In order to get the volume in Equation \eqref{eq: Csigma phase 1}, we need to analyze the  effect of the change-of-basis from the standard basis $\be=(e_1,\ldots, e_{\rho+2})$ of  $H^0(X,\bZ)\oplus NS(X)\oplus H^4(X,\bZ)$ to the basis $\bf=(f_1,\ldots, f_{\rho+2})$ where $f_1=\re(\varphi)$, $f_2=\im(\varphi)$, and $f_j=w_{j-2}$ for $j=3,\ldots, \rho+2$. Denote by $A$ the change of basis matrix, i.e. $A\be=\bf$.
Then 
\[
\langle \bf_i, \bf_j\rangle=\langle \sum_k a_{ik}e_k,\sum_\ell a_{j\ell}e_\ell\rangle= \sum_{k,\ell} a_{ik}a_{j\ell}\langle e_k, e_\ell\rangle,\text{ i.e. } A[\langle e_i, e_j\rangle]A^T=[\langle f_i, f_j\rangle],
\]
where $[\langle e_i, e_j\rangle]$ and $[\langle f_i, f_j\rangle]$ denote the Gram matrices for $\be$ and $\bf$, respectively.  So then 
\[
(\det A)^2=\frac{\det [\langle f_i, f_j\rangle]}{\det [\langle e_i, e_j\rangle]}.
\]
Since $\det[\langle f_i, f_j\rangle]=(\omega^2)^2$ and $\det [\langle e_i, e_j\rangle]=\disc \NS(X)$, we get that the determinant of the change of basis matrix is 
\begin{equation}
\det (A)=\frac{\omega^2}{\sqrt{|\disc(\NS(X))|}}.
\end{equation} Hence the final result is
\begin{equation}\label{eq: Csigma phase 1 result}
C(\sigma)=\frac{2\pi^{(\rho+2)/2}}{(\rho+2)\Gamma(\frac\rho2+1)(\omega^2)^{(\rho+2)/2}\sqrt{|\disc\NS(X)|}}.
\end{equation}
We see that the leading coefficient depends on the following geometric data 
\begin{itemize}
    \item rank and discriminant of $\NS(X)=\left<\Omega\right>^{\perp_{H^2(X,\bZ)}}$, and
    \item $\omega^2$ (the ``B-field" $B$ does not matter).\\
\end{itemize}

\begin{center}
\fbox{\textbf{Geometric stability conditions of any phase}}
\end{center}

For a geometric stability condition $\sigma\in U(\cD)$ of phase other than 1, we know there is a unique $g\in \GL^+(2,\bR)$ such that $Z_\sigma\cdot g=Z_{\sigma'}\in L(\cD)$ for some $\sigma'\in V(\cD)$. Note that $\GL^+(2,\bR)\cong \bR^+\times \SL(2,\bR)$.  If $g\in \bR^+$, it is simply a scaling $Z_{\sigma'}=gZ_\sigma$, so $N_{\sigma'}(R)=N_{\sigma}(R/g)$, and so $C(\sigma')=C(\sigma)/g^{\rho+2}$.  If $g$ belongs to the rotation part of $\SL(2,\bR)$ doesn't change the norm $|Z_{\sigma'}|=|Z_{\sigma}|$, so $N_{\sigma'}(R)=N_{\sigma}(R)$. So the only thing left to consider a shear by $\kappa+i\lambda$ with $\lambda>0$, which is represented by the matrix $\begin{pmatrix}1 & \kappa \\ 0 & \lambda \end{pmatrix}$ so $Z_{\sigma'}=\re Z_{\sigma}+i(\kappa \re Z_{\sigma}+\lambda\im Z_{\sigma})$.  Then Equation \eqref{eq: Csigma} in this context is
\begin{equation}\label{eq: Csigma any phase}
 C(\sigma)= \Vol\{v\in N(\cD)_\bR, v^2\geq 0, \l \re(\varphi), v\r^2+\left(\l \re(\varphi),v\r \kappa+ \l\im(\varphi), v\r \lambda\right)^2\leq 1\}.
 \end{equation}

 The computation is very similar to what we did above in the phase 1 case.  In exactly the same way, we write  $v=\alpha_1\re(\varphi)+\alpha_2\im(\varphi)+\beta,$ where $\beta=\beta_1w_1+\cdots +\beta_\rho w_\rho$ as in Equation \eqref{eq: w basis}. So the determinant of the change of basis matrix for $\{\re{\varphi}, \im{\varphi}, w_1,\ldots, w_\rho\}$  is again  $\frac{\omega^2}{\sqrt{|\disc(\NS(X))|}}$.
 Then the $v^2\geq 0$ condition is the same as in Equation \eqref{eq: alpha beta 1}, and Equation \eqref{eq: alpha beta 2} in this context is
 \begin{equation} \label{eq: alpha beta 2 shear}
  \l \re(\varphi), v\r^2+\left(\l \re(\varphi),v\r \kappa+ \l\im(\varphi), v\r \lambda\right)^2\leq 1 \quad \Leftrightarrow \quad \alpha_1^2+(\kappa\alpha_1+\lambda\alpha_2)^2\geq \left(\frac{1}{\omega^2}\right)^2.
 \end{equation}
Again, we can  compute $C(\sigma)$ given in Equation \eqref{eq: Csigma any phase} by computing the volume of $\{\alpha_1,\alpha_2,\beta_1,\ldots,\beta_\rho\}$ satisfying Equations \eqref{eq: alpha beta 1} and \eqref{eq: alpha beta 2 shear}, and then multiply by the determinant of the change of basis matrix for $\{\re(\varphi),\im(\varphi), w_1,\ldots w_\rho\}$.  To compute the volume of $\{\alpha_1, \alpha_2,\beta_1,\ldots, \beta_\rho\}$ satisfying the two conditions, we change to polar coordinates by letting
  \begin{equation}\label{eq: alpha to polar}
  \alpha_1=r\cos \theta, \quad \kappa\alpha_1+\lambda\alpha_2=r\sin\theta.
  \end{equation}
  Then $0\leq r\leq\frac{1}{\omega^2}$ and 
  \begin{equation}
  \beta_1^2+\cdots +\beta_\rho^2\leq  \omega^2(\alpha_1^2+\alpha_2^2)=r^2\omega^2\left(\cos^2\theta+ \frac{1}{\lambda^2} (\sin\theta-\kappa\cos\theta)^2\right).
  \end{equation}
From Equation \eqref{eq: alpha to polar}, we see that the Jacobian determinant for the change of variable between $(\alpha_1,\alpha_2)$ and $(r,\theta)$ is
\begin{equation}
    \left|\frac{\partial(\alpha_1,\alpha_2)}{\partial(r,\theta)} \right|=\det \begin{bmatrix}\cos\theta & -r\sin \theta\\ \frac{1}{\lambda}(\sin\theta-\kappa \cos \theta)& \frac{r}{\lambda}(\cos \theta+\kappa\sin\theta) \end{bmatrix}=\frac{r}{\lambda}.
\end{equation}
So the volume with respect to $\{\alpha_1,\alpha_2,\beta_1,\ldots, \beta_\rho\}$ is:
 \begin{equation}
 \begin{split}
 &\displaystyle\int_0^{\frac{1}{\omega^2}} \displaystyle\int_0^{2\pi} \left|\frac{\partial(\alpha_1,\alpha_2)}{\partial(r,\theta)} \right| \Vol B_\rho\left(r\sqrt{\omega^2\left(\cos^2\theta+ \frac{1}{\lambda^2} (\sin\theta-\kappa\cos\theta)^2\right)}\right) d\theta dr\\
 =&  \displaystyle\int_0^{\frac{1}{\omega^2}} \displaystyle\int_0^{2\pi}  \frac{r}{\lambda}  \frac{\pi^{\rho/2}(\omega^2)^{\rho/2} r^\rho}{\Gamma\left(\frac{\rho}{2}+1\right)} \left(\cos^2\theta+ \frac{1}{\lambda^2} (\sin\theta-\kappa\cos\theta)^2\right)^{\rho/2}d\theta dr\\
 =& \frac{\pi^{\rho/2}(\omega^2)^{\rho/2}}{{\lambda\Gamma\left(\frac{\rho}{2}+1\right)}} \displaystyle\int_0^{\frac{1}{\omega^2}}  r^{\rho+1}dr \displaystyle\int_0^{2\pi}  \left(\cos^2\theta+ \frac{1}{\lambda^2} (\sin\theta-\kappa\cos\theta)^2\right)^{\rho/2}d\theta\\
 =& \frac{\pi^{\rho/2}}{(\rho+2)\Gamma\left(\frac{\rho}{2}+1\right)\lambda(\omega^2)^{(\rho+4)/2}}\displaystyle\int_0^{2\pi}   \left(\cos^2\theta+ \frac{1}{\lambda^2} (\sin\theta-\kappa\cos\theta)^2\right)^{\rho/2}d\theta.
 \end{split}
 \end{equation}
Then multiplying by $\frac{\omega^2}{\sqrt{|\disc(\NS(X))|}}$ gives
\begin{equation}
    C(\sigma)=\frac{\pi^{\rho/2}\displaystyle\int_0^{2\pi}   \left(\cos^2\theta+ \frac{1}{\lambda^2} (\sin\theta-\kappa\cos\theta)^2\right)^{\rho/2}d\theta}{(\rho+2)\Gamma\left(\frac{\rho}{2}+1\right)\lambda(\omega^2)^{(\rho+2)/2}\sqrt{|\mathrm{Disc}\NS(X)|}}.
\end{equation}
One can see that for  $\kappa=0, \lambda=1$, this matches the phase 1 case.  Again we see that this leading coefficient does not depend on the $B$-field just like the phase 1 case.

\section{Counting special Lagrangian classes}
In this section, we explain Theorems \ref{thm: SL omega Omega} and  \ref{thm: SL P}.  First, we consider
\begin{equation}
SL_{\omega, \Omega}(R) =\#\left\{\gamma\in \SLag(X,\omega, \Omega):  |\gamma.\Omega|\leq R \right\}\subset  H^2(X,\bZ).
\end{equation}
By Lemma \ref{lemma: sLag},
\begin{equation}
SL_{\omega, \Omega}(R)\leq \# \left\{\gamma\in \Lag(X,\omega) : \gamma^2\geq -2, |\gamma.\Omega|\leq R \right\}.
 \end{equation}
Since $\gamma^2$ is an indeffinite quadratic form and $|\gamma.\Omega|^2$ is a positive semidefinite quadratic form, we can use  Lemma \ref{lemma: dilate} below to conclude that
\begin{equation}
SL_{\omega, \Omega}(R) =C(\omega,\Omega)R^{\rho+2}+o(R^{\rho+2}).
\end{equation}
where
\begin{equation}
C(\omega,\Omega)=\Vol\{\gamma\in \Lag(X,\omega)_\bR: \gamma^2\geq 0, (\gamma.\Omega)^2=(\gamma.\re\Omega)^2+(\gamma.\im\Omega)^2\leq 1\}.
\end{equation}
Note that even though $\re\Omega$ and $\im \Omega$ are orthogonal to $\omega$, they might not be in $\Lag(X,\omega)_\bR$.  This is because $\Lag(X,\omega)\subset H^2(X,\bZ)$ consists of integral vectors orthogonal to $\omega$, which might not generate the space that $\re\Omega$ and $\im \Omega$ live in.  If we work with the case where $(X,\omega)$ is such that $\Lag(X,\omega)$ has full rank, i.e. with signature $(2,19)$, then $\re\Omega$ and $\im \Omega$ are both in $\Lag(X,\omega)_\bR$.  In this case the calculation is exactly the same as the calculation we did in Section \ref{sec: count semistable objects} for the geometric stability conditions of phase 1, the only difference being that we replace $\rho$ in that calculation by $19$ here, and replace $\omega^2$ there by $K_\Omega=(\re\Omega)^2=(\im\Omega)^2$ here. We get 
\begin{equation}
C(\omega,\Omega)=\frac{2\pi^{21/2}}{21\Gamma(\frac{21}{2})K_{\Omega}^{{21}/2}\sqrt{|\disc \Lag(X,\omega)|}}.
\end{equation}
This case where $\Lag(X,\omega)$ having signature $(2,19)$  can be achieved when $\omega$ is rational, i.e. in $H^2(X,\bQ)$; in particular, it can be achieved for polarized K3 surfaces $(X,\omega)$.

\begin{lemma}\label{lemma: dilate}
Suppose $P$ is a positive semidefinite quadratic form and $Q$ is any quadratic form.  Then for 
\[N_{P,Q}(R)=\#\{x\in \bZ^n: P(x)\leq R^2, Q(x)\geq -c\},\]
where $c$ is a positive constant, we have 
\[
N_{P,Q}(R)=C_{P,Q}R^n+o(R^{n}),
\]
where 
\[C_{P,Q}=\Vol \{x\in \bR^n : P(x)\leq 1, Q(x)\geq 0\}.\]
\end{lemma}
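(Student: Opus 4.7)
The plan is to recast the count as lattice points in a dilated region and invoke the classical asymptotic
\[
\#(\bZ^n\cap RK)=\Vol(K)\,R^n+o(R^n)
\]
valid for any bounded Jordan-measurable $K\subset\bR^n$. Since $P$ and $Q$ are homogeneous of degree two, the substitution $y=x/R$ gives
\[
N_{P,Q}(R)=\#\bigl(\bZ^n\cap R\cdot T_R\bigr),\qquad T_R:=\bigl\{y\in\bR^n\colon P(y)\le 1,\ Q(y)\ge -c/R^2\bigr\}.
\]
The family $\{T_R\}_{R\ge 1}$ is monotonically decreasing, and $\bigcap_{R\ge 1}T_R=\{P\le 1,\ Q\ge 0\}=:\Omega_1$, whose volume is exactly $C_{P,Q}$.

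The heart of the argument is a sandwich. For each $\eta>0$, set $T'_\eta:=\{P\le 1,\ Q\ge -\eta\}$. For $R\ge\sqrt{c/\eta}$ we have $\Omega_1\subseteq T_R\subseteq T'_\eta$, hence
\[
\#(\bZ^n\cap R\,\Omega_1)\ \le\ N_{P,Q}(R)\ \le\ \#(\bZ^n\cap R\,T'_\eta).
\]
Applying the lattice-point asymptotic to both ends and dividing by $R^n$,
\[
\Vol(\Omega_1)\ \le\ \liminf_{R\to\infty}\frac{N_{P,Q}(R)}{R^n}\ \le\ \limsup_{R\to\infty}\frac{N_{P,Q}(R)}{R^n}\ \le\ \Vol(T'_\eta).
\]
Sending $\eta\to 0^+$ and invoking monotone convergence reduces the problem to showing $\Vol(T'_\eta)\to\Vol(\Omega_1)$, which holds because the boundary slab $\{P\le 1,\ -\eta\le Q\le 0\}$ has vanishing measure as $\eta\to 0$ (the zero locus of the nontrivial polynomial $Q$ has positive codimension, so its intersection with $\{P\le 1\}$ is Lebesgue-null). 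This establishes $N_{P,Q}(R)/R^n\to C_{P,Q}$.

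The main obstacle is verifying the hypotheses of the lattice-point asymptotic for $\Omega_1$ and $T'_\eta$. Jordan measurability is automatic since these are bounded semi-algebraic sets, so their boundaries lie in a finite union of real-algebraic hypersurfaces and have Lebesgue measure zero. Boundedness, on the other hand, is not formally guaranteed by the stated hypotheses and has to be checked in context: in both applications (Sections~\ref{sec: count semistable objects} and~4) the form $P$ is the squared modulus of a $\bC$-linear form whose real and imaginary parts span a positive definite $2$-plane in an ambient lattice of signature $(2,*)$, so the null space of $P$ is negative definite for $Q$, making $\{P\le 1,\ Q\ge -c\}$ bounded. I would therefore prove the lemma under the implicit assumption that $T_1$ (equivalently, $T'_\eta$ for small $\eta$) is bounded, or, more cleanly, add this boundedness as an explicit hypothesis to the lemma; the proof above then goes through verbatim.
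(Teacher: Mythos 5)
Your argument is correct and is essentially the paper's own proof: both reduce to the classical (Gauss-circle) lattice-point asymptotic for the $R$-dilate of the fixed region $\{P\le 1,\ Q\ge 0\}$, and both absorb the points with $Q(x)\in[-c,0)$ into $o(R^n)$ by noting that they lie in $R$ times a slab $\{P\le 1,\ -c/R_0^2\le Q\le 0\}$ whose volume can be made arbitrarily small; your sandwich between $\Omega_1$ and $T'_\eta$ is the same estimate written as a two-sided bound.

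The one point where you go beyond the paper is the boundedness caveat, and it is well taken rather than pedantic: with $P$ only positive semidefinite, the set $\{P\le 1,\ Q\ge -c\}$ need not be bounded (take $P\equiv 0$), the dilate asymptotic the paper invokes does not literally apply, and $N_{P,Q}(R)$ can even be infinite, so the lemma as stated needs an extra hypothesis. In each of the paper's applications the radical of $P$ is negative definite for $Q$ --- there $P$ is the squared length of the projection onto a maximal positive definite subspace of a lattice of signature $(2,\rho)$, $(2,19)$, or $(3,19)$ --- which forces the region to be bounded, exactly as you observe; adding that boundedness as an explicit hypothesis, as you propose, is the right fix.
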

\begin{proof} 
Let 
\[\Upsilon_{P,Q}(R)=\{x\in \bR^n \mid P(x)\leq R^2, Q(x)\geq 0\}.\]
{Using Lemma} \ref{lem: Gauss circle} {with $\Upsilon=\Upsilon_{P,Q}$}, we get that 
\[
\lim_{R\to \infty} \frac{\# \{x\in \bZ^n \cap \Upsilon_{P,Q}(R)\} }{R^n}= \Vol(\Upsilon_{P,Q}(1))
\]

Now consider the remaining points $\bZ^n\cap \Upsilon_{P,Q}^c(R)$, where
\[
\Upsilon_{P,Q}^c(R): = \{x\in \bR^n : P(x)\leq R^2, Q(x)\in [-c, 0)\}.
\]
Note that
\[
\Upsilon_{P,Q}^c(R) =R\cdot \widetilde\Upsilon_{P,Q}^c(R)\text{, where }  \widetilde\Upsilon_{P,Q}^c(R)=\left\{ x\in \bR^n: P(x)\leq 1, Q(x)\in \left[-\frac{c}{R^2}, 0\right]\right\}.
\] 
Whenever $R>R_0$ for some fixed $R_0$, we have
$\Upsilon_{P,Q}^c(R) \subset R \cdot \widetilde\Upsilon_{P,Q}^c(R_0)$, so 
\[
\lim_{R\to \infty} \frac{\#\{x\in \Upsilon_{P,Q}^c(R)\cap \bZ^n\}}{R^n}\leq \Vol(\widetilde\Upsilon_{P,Q}^c(R_0)).
\]
Because the boundary $\{x\in \bR^n\mid P(x)\leq 1, Q(x)=0\}$ has measure zero, the $\Vol(\widetilde\Upsilon_{P,Q}^c(R_0))$ can be made arbitrarily small by taking $R_0$ to be arbitrarily large.  So the number of remaining points $\#\{x\in \Upsilon_{P,Q}^c(R)\cap \bZ^n\} =o(R^n)$.\\
\end{proof}
 
Now let us consider the twistor plane
\begin{equation}
P:=\langle \omega, \re \Omega,  \im \Omega\rangle \subset H^2(X,\bR),
\end{equation}
where $\omega, \re\Omega, \im\Omega$ are normalized so they form an orthonormal basis of $P$ as introduced in Section \ref{sec: twistor}. The twistor plane defines a seminorm $\|\cdot\|_P$ on $H^2(X,\bZ)$, which is for $\gamma\in H^2(X,\bZ)$,
\begin{equation}
||\gamma||_P:=\sup_{u\in P, u^2=1}\gamma.u.
\end{equation}
Since $H^2(X,\bR)=P\oplus P^\perp$, for any  $\gamma\in H^2(X,\bZ)$, we can write $\gamma=\gamma_P\oplus \gamma_{P^\perp}$.  Then $\|\gamma\|_P$ is the norm of the projection $\gamma_P$ of $\gamma$ onto  the 3-plane $P$.  

{For a fixed $P$, instead of fixing an $\omega$ like we did in the count of $SL_{\omega, \Omega}(R)$, here we incorporate all $\omega_t\in \bS^2(P)$ by considering}
\begin{equation}
SL_P(R)= \#\left\{\gamma\in H^2(X,\bZ)\colon
\exists \omega_t\in \bS^2(P), \gamma\in \SLag(X, \omega_t, \Omega_t), \text{ and } |\gamma.\Omega_t|\leq R \right\}.
\end{equation}
Note that $|\gamma.\Omega_t|$ is the length of the projection of $\gamma$ to the positive 2-plane spanned by orthonormal vectors $\{\re \Omega_t, \im \Omega_t\}$.  So if there is a $t$ such that $\gamma.\omega_t=0$, i.e. $\gamma$ is a Lagrangian with respect to $\omega_t$, then \begin{equation}
    |\gamma.\Omega_t|=\|\gamma\|_P.
\end{equation}
The characterization given by Lemma \ref{lemma: sLag} for $\gamma\in \SLag(X,\omega_t,\Omega_t)$ implies that  
\begin{equation}\label{eq: SL P bound}
\begin{split}
SL_P(R)
&\leq  \#\left\{\gamma\in H^2(X,\bZ): \gamma^2\geq -2, \exists \omega_t\in \bS^2(P) \text{ s.t. } \gamma.\omega_t=0, \text{ and }  ||\gamma||_P\leq R\right\}\\
&= \#\{\gamma\in H^2(X,\bZ) : \gamma^2\geq -2, ||\gamma||_P\leq R\}.
\end{split}
\end{equation}
The last equality is due to the fact that for any $\gamma\in H^2(X,\bZ)$, it is always in the Lagrangian class for some $\omega_t$, i.e. $\gamma.\omega_t=0$ since we can always project $\gamma$ onto $P$ and then choose $\omega_t$ to be orthogonal to that.

We can then use Lemma \ref{lemma: dilate}, since $\|\gamma\|_P^2$ is positive semidefinite, to conclude that 
 the leading term of $\#\{\gamma\in H^2(X,\bZ): \gamma^2\geq -2, \|\gamma\|^2_P\leq R^2\}$ is given by the leading term of the counting function $\#\{\gamma\in H^2(X,\bZ): \gamma^2\geq 0, \|\gamma\|^2_P\leq R^2\}$.  Then using Lemma \ref{lemma: indep of P} below, and combining with Equation \eqref{eq: SL P bound}, we can conclude that 
 \begin{equation}
     SL_P(R)\leq C\cdot R^{22}+o(R^{22}),
 \end{equation}
 where $C$ is a constant independent of the choice of $P\subset \Lambda_\bR$.
 
\begin{lemma}\label{lemma: indep of P} Let $Q$ be a signature $(m, n)$ quadratic form on $\mathbb R^d$, with $d= m+n$. Given $v\in \bR^d$ and an $m$-dimensional subspace $P\subset \bR^d$ that is positive definite (i.e. for all nonzero $u\in P$, we have $Q(u,u)>0)$), define   
$$\|v\|_P = \max\{ Q(v, u): u \in P, Q(u, u) = 1\}.$$ 
Then the asymptotics of the counting function 
$$N_P(R) = \#\{ v \in \mathbb Z^d: Q(v,v)\geq 0, \|v\|_P \le R\}$$
is given by 
$$N_P(R) =c_P R^d+o(R^d),$$ 
where the constant
$$c_P = \Vol\{ v \in \mathbb R^d: Q(v,v)\geq 0, \|v\|_P \le 1\}$$ 
 is independent of the choice of $m$-dimensional positive definite subspace $P$.
\end{lemma}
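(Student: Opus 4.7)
The plan is to mimic the argument of Lemma \ref{lemma: dilate}: express $N_P(R)$ as the count of lattice points in an $R$-dilate of a fixed region, extract the volume of that region as the leading coefficient, and then show that the volume does not depend on $P$ by exhibiting a volume-preserving linear symmetry of the problem.

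First I would show that $\|v\|_P^2$ is a positive semidefinite quadratic form on $\bR^d$. Since $P$ is positive definite for $Q$, the restriction $Q|_P$ is nondegenerate and one has a $Q$-orthogonal decomposition $\bR^d=P\oplus P^\perp$. Writing $v=v_P+v_{P^\perp}$, the Cauchy-Schwarz inequality applied to the positive definite inner product $Q|_P$ gives $\|v\|_P=\sqrt{Q(v_P,v_P)}$, so $\|v\|_P^2=Q(v_P,v_P)$ is indeed positive semidefinite. Setting
$$\Upsilon_P=\{v\in\bR^d : Q(v,v)\geq 0,\ \|v\|_P\leq 1\},$$
one has $\{v\in\bR^d : Q(v,v)\geq 0,\ \|v\|_P\leq R\}=R\cdot\Upsilon_P$, and $\Upsilon_P$ is closed with piecewise smooth boundary of measure zero. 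The Gauss-type lattice point count used in the proof of Lemma \ref{lemma: dilate} then yields $N_P(R)=c_P R^d+o(R^d)$ with $c_P=\Vol(\Upsilon_P)$.

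The heart of the lemma is the independence of $c_P$ from $P$. I would invoke the fact (a standard consequence of Witt's extension theorem) that the indefinite orthogonal group $O(Q)\cong O(m,n)$ acts transitively on the set of $m$-dimensional positive definite subspaces of $\bR^d$. Given two such planes $P,P'$, pick $g\in O(Q)$ with $g(P)=P'$. Because $g$ preserves $Q$, the assignment $u\mapsto g(u)$ identifies $\{u\in P : Q(u,u)=1\}$ with $\{u'\in P' : Q(u',u')=1\}$, and a short computation gives
$$\|g(v)\|_{P'}=\max_{u\in P,\, Q(u,u)=1}Q(g(v),g(u))=\max_{u\in P,\, Q(u,u)=1}Q(v,u)=\|v\|_P$$
for every $v\in\bR^d$. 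Therefore $g$ maps $\Upsilon_P$ bijectively onto $\Upsilon_{P'}$, and since every element of $O(Q)$ has determinant $\pm 1$, $g$ preserves Lebesgue measure; hence $\Vol(\Upsilon_P)=\Vol(\Upsilon_{P'})$.

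Neither step presents a serious obstacle: the dilation argument is already packaged in Lemma \ref{lemma: dilate}, and the invariance under $O(Q)$ reduces to transitivity on positive definite $m$-planes. The only subtle point is that the transport of $\Upsilon_P$ to $\Upsilon_{P'}$ must be effected by one global linear map, not merely a comparison of projected unit balls inside $P$ and $P'$; this is precisely why the full $O(Q)$-equivariance of both the constraint $Q(v,v)\geq 0$ and the seminorm $\|v\|_P$ is essential.
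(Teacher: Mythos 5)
Your proposal is correct and follows essentially the same route as the paper's proof: the $R$-dilation/Gauss count for the leading term, then transitivity of the orthogonal group of $Q$ on positive definite $m$-planes together with the identity $\|g(v)\|_{gP}=\|v\|_P$ and $|\det g|=1$ to transport $\Upsilon_P$ to $\Upsilon_{P'}$ by a measure-preserving linear map. The only differences are cosmetic: you work with $O(Q)$ and justify transitivity via Witt's extension theorem (the paper asserts it for $SO(Q)$), and you spell out that $\|v\|_P=\sqrt{Q(v_P,v_P)}$ is a positive semidefinite quadratic form, a detail the paper leaves implicit.
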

\begin{proof} Because $P$ is positive definite, the set
\[
\Upsilon_P(R):=\{v\in \bR^d: Q(v,v)\geq 0, \|v\|_P\leq R\}=\{Rv: v\in \Upsilon_P(1)\}=R\cdot\Upsilon_P(1)
\]
is a {$R$-dilate} of  $\Upsilon_P(1)$.  {Using Lemma} \ref{lem: Gauss circle} {with $\Upsilon=\Upsilon_P$ and $n=d$}, we get that $N_P(R)=c_PR^d+o(R^d)$, where $c_P=\Vol(\Upsilon_P(1))$.

Now we show that $c_P$ is independent of the choice of $P$.  Let $\mathcal P$ denote the collection of $m$-dimensional positive definite subspaces $P$. Our claim will follow from the fact that $G = SO(Q)$ acts transitively on $\mathcal P$. Note that if $P \in \mathcal P$, $gP$ is also in $\mathcal P$ since $g$ preserves $Q$. This is due to Sylvester's law of inertia.

Next note that 
\begin{align*}
    \|v\|_{gP} &= \max\{ Q(v, u): u \in gP, Q(u, u) = 1\}\\
    &= \max\{ Q(v, gu): u \in P, Q(u, u) = 1\}\\
    &= \max\{ Q(g^{-1}v, u): u \in P, Q(u, u) = 1\}\\
    & = \|g^{-1}v\|_P.
\end{align*}
Therefore, 
\begin{align*}
    &\{v \in \mathbb R^d: Q(v,v)\geq 0, \|v\|_{gP} \le 1\} \\
    =&\{v \in \mathbb R^d: Q(v,v)\geq 0, \|g^{-1}v\|_P \le 1\}\\
    =&\{gw \in \mathbb R^d: Q(gw, gw)\geq 0, \|w\|_P \le 1\} \\
    =& \{gw \in \mathbb R^d: Q(w, w)\geq 0, \|w\|_P \le 1\} \\
    =& g\{w \in \mathbb R^d: Q(w,w)\geq 0, \|w\|_P \le 1\},
\end{align*}
and so $c_{gP} = c_P,$ since the action of $G = SO(Q)$ preserves Lebesgue measure on $\mathbb R^d$.
\end{proof}

\bigskip

\bibliographystyle{amsalpha}
\bibliography{AFL}

\medskip

\noindent Jayadev S. Athreya \\
\textsc{Department of Mathematics, University of Washington\\
Seattle, WA 98195, USA}\\
\texttt{jathreya@uw.edu}\\

\noindent Yu-Wei Fan \\
\textsc{Yau Mathematical Sciences Center, Tsinghua University\\
Beijing 100084, China}\\
\texttt{ywfan@mail.tsinghua.edu.cn}\\

\noindent Heather Lee \\
\texttt{heatherlee.math@gmail.com}\\

\end{document}